\documentclass[10pt]{article}
\usepackage{graphicx}
\usepackage{amsmath, amsthm, amssymb,amsfonts}
\usepackage[mathscr]{euscript}
\usepackage{cleveref}
\usepackage{xcolor}
\usepackage{enumitem,bm}
\usepackage{mathtools,accents,upgreek,bm}
\usepackage{mathrsfs,comment,aliascnt,braket,esint}
\usepackage{microtype}

\usepackage{geometry}
\newcommand{\cC}{\mathcal{C}}
\DeclareTextFontCommand{\dc}{\color{purple}}

\newcommand{\K}{\mathscr{K}}
\newcommand{\m}{\mathfrak{m}}
\newcommand{\M}{{\mathit{M}}}
\DeclareMathOperator{\diam}{diam}
\renewcommand{\d}{\,\mathrm{d}}

\newcommand{\Test}{\mathrm{Test}}
\newcommand{\e}{\mathrm{e}}

\newcommand{\D}{\mathscr{D}}

\newcommand{\loc}{{\mathrm{loc}}}

\DeclareMathOperator{\RCD}{RCD}

\renewcommand{\k}{\mathrm{k}}
\renewcommand{\P}{\mathsf{P}}
\newcommand{\One}{1}

\newcommand{\dist}{\mathsf{d}}
\newcommand{\df}{\coloneqq}
\newcommand{\Ch}{{\sf Ch}}
\DeclareMathOperator{\Lip}{Lip}
\DeclareMathOperator{\AC}{AC}

\DeclareMathOperator{\RR}{\mathbb{R}}

\newcommand{\BE}{{\mathrm{BE}}}

\theoremstyle{plain}
\newtheorem{theorem}{Theorem}[section]
\newtheorem{proposition}[theorem]{Proposition}

\newtheorem{lemma}[theorem]{Lemma}

\theoremstyle{definition}
\newtheorem{definition}[theorem]{Definition}
\newtheorem{remark}[theorem]{Remark}

\newcommand{\contpath}{\cC([0,1],M,\dist)}
\newcommand{\contpathxy}{\cC_{xy}([0,1],M,\dist)}
\newcommand{\accurves}{\AC^2(M,\dist)}
\newcommand{\accurvesxy}{\AC^2_{xy}(M,\dist)}

\newcommand{\Ltwo}{L^2(M,\m)}
\newcommand{\Linf}{L^{\infty}(M,\m)}
\newcommand{\Sob}{W^{1,2}(M,\dist,\m)}

\newcommand{\Lonew}{L^1(M,\m_w)}

\newcommand{\Linfw}{L^{\infty}(M,\m_w)}

\newcommand{\Ltwoloc}{L^2_{\text{loc}}(M,\m)}

\newcommand{\Sobloc}{W^{1,2}_{\text{loc}}(M,\dist,\m)}

\title{Doubling, Poincaré and Gromov--Hausdorff precompactness\\for tamed spaces}

\author{Mathias Braun\thanks{M.~Braun: Institute of Mathematics, EPFL, 1015 Lausanne, Switzerland. mathias.braun@epfl.ch} \and  Chiara Rigoni\thanks{C.~Rigoni: Faculty of Mathematics, University of Vienna, Oskar-Morgenstern-Platz 1, 1090 Vienna, Austria. chiara.rigoni@univie.ac.at} \and Christian Rose\thanks{C.~Rose: Insititut f\"ur Mathematik, Universit\"at Potsdam, Karl-Liebknecht-Stra\ss{}e 24-25, 14476 Potsdam, Germany. christian.rose@uni-potsdam.de} \and
  David Tewodrose\thanks{D.~Tewodrose: Department of Mathematics and Data Science, Vrije Universiteit Brussel, Pleinlaan 2, 1050 Elsene, Belgium. david.tewodrose@vub.be}
}

\begin{document}

\maketitle

\begin{abstract}
    We establish local volume doubling and a local $L^2$-Poincaré inequality for metric measure spaces tamed by a measure in the extended Kato class that satisfy the bounded interpolation property. To this end, we construct a bi-Lipschitz time change to a metric measure space satisfying a uniform lower Ricci curvature bound. As an application, we obtain pointed measured Gromov--Hausdorff precompactness for such spaces.\\

    \noindent \textbf{Keywords:} Tamed spaces; Volume doubling; Poincaré inequality; Gromov-Hausdorff convergence

    \noindent \textbf{MSC2020:} 53C23, 35K05, 31C25 
\end{abstract}

\tableofcontents

\section{Introduction}

Volume doubling and Poincaré inequalities play a central role in the analysis of metric measure spaces. These properties enable the extension of classical Euclidean analysis to this more general setting. As observed by Gromov \cite{Gromov}, volume doubling also ensures pointed measured Gromov--Hausdorff (pmGH) compactness and guarantee the existence of tangent cones at any point of a metric measure space. The study of tangent cones leads to structure results like rectifiability or stratification of the singular set for classes of singular spaces. In particular, this applies to sequences of metric measure spaces satisfying uniform constant lower curvature bounds. This was pioneered for Alexandrov spaces by Burago--Gromov--Perelman \cite{BuragoGromovPerelman} and Ricci limit spaces by Cheeger--Colding \cite{ChCo97}.

Riemannian manifolds with possibly unbounded yet $L^p$-small negative part of the Ricci curvature also satisfy volume doubling and Poincar\'e, as shown by Petersen--Wei \cite{PW1} and Dai--Wei--Zhang \cite{DWZ18}. More generally, extensions to so-called Kato-bounded Ricci curvature have been found in Rose \cite{Rose19}, Carron \cite{Carron-19}, and Carron--Mondello--Tewodrose \cite{CMT_KatoBE}. The Cheeger--Colding structure theory for corresponding pmGH limit spaces was developed by Carron--Mondello--Tewodrose in the articles \cite{CMT,CMT2,CMT_KatoBE}.

However, the space of manifolds is not closed in the pmGH topology. The natural class of spaces for pmGH convergence are metric measure spaces, whose study has received considerable attention during the last decades. For finite-dimensional RCD spaces, pmGH compactness and a corresponding Cheeger--Colding structure theory is nowadays well-established, cf.~Sturm \cite{sturm2006-ii}, Gigli \cite{gigli-splitting}, Mondino--Naber \cite{MondinoNaber}, and Brué--Semola \cite{BrueSemola}. RCD spaces are metric measure spaces with a generalized lower Ricci curvature (and upper dimension) bound in terms of optimal transport introduced by Sturm \cite{sturm2006-i,sturm2006-ii} and Lott--Villani \cite{lott-villani2009}, enhanced by the hypothesis of infinitesimal Hilbertianity due to Gigli \cite{Gigli15}. 

This motivates a Cheeger--Colding structure theory for spaces covering these scenarios: \emph{non\-smooth} spaces with \emph{nonconstant}, possibly unbounded lower Ricci curvature bounds. This article initiates this program by establishing volume doubling, Poincaré inequalities, and a fundamental pmGH compactness theorem. Our framework are metric measure spaces which are tamed in the sense of Erbar--Rigoni--Sturm--Tamanini \cite{ERST}, i.e.~they admit synthetic lower Ricci curvature bounds in the (extended) Kato class. Compared to RCD spaces, tamed metric measure spaces are expected to admit more diverse structures. For instance, Carron--Mondello--Tewodrose \cite{CMT_branching} show that non-collapsed limits of manifolds with a controlled Kato bound on the Ricci curvature may exhibit branching --- a phenomenon that does not occur for finite-dimensional RCD spaces, as shown by Deng \cite{deng}. Moreover, there are finite-dimensional RCD spaces which cannot arise as non-collapsed pmGH limits of Riemannian manifolds with a uniform Ricci curvature lower bound.

We now describe our framework more precisely. We consider a locally compact length metric measure space $(\M,\dist,\m)$ that is infinitesimally Hilbertian in the sense of Gigli \cite{Gigli15}; as usual, metric measure spaces are supposed here complete and separable. Their Cheeger energy $\Ch$ is a strongly local Dirichlet form; we assume it admits a heat kernel and the induced intrinsic distance (in the sense of Dirichlet forms) coincides with $\dist$. We also suppose $\m(M) < \infty$, although we believe this assumption can be dispensed. In addition, we assume $(\M,\dist,\m)$ satisfies the so-called \emph{bounded interpolation property} of Gigli--Nobili \cite{GigliNobili}. In a nutshell, this is a first-order condition that stipulates the rich existence of Wasserstein geodesics whose $\m$-densities are uniformly bounded; cf.~\Cref{Def:BIP} for details. Our last hypothesis is that $(M,\dist,\m)$ satisfies the  \emph{taming condition} $\BE(-\kappa,n)$ in the sense of Erbar--Rigoni--Sturm--Tamanini \cite{ERST}, where $\kappa$ is a Borel measure in the extended Kato class of $(\M,\dist,\m)$ and $n\in [1,\infty)$. As detailed in \Cref{Def:BE}, $\BE(-\kappa,n)$ means that for sufficiently many functions $f$, the Bochner inequality 
\begin{align*}
    \Big[\Delta\frac{\vert \nabla f\vert^2}{2} - \langle\nabla f,\nabla\Delta f\rangle\Big]\,\m\, \geq \frac{(\Delta f)^2}{n}\,\m - \vert\nabla f\vert^2\,\kappa
\end{align*}
holds in a weak sense; $\kappa$ belonging to the extended Kato class means that there exists $T>0$ such that the \emph{Kato constant} $\k_T(\M,\dist,\m,\kappa)$ from Definition \ref{def:Kato} satisfies $\k_T(\M,\dist,\m,\kappa)<1$.

We now introduce our main results. Given $\rho,\nu>0$, we say $(\M,\dist,\m)$ satisfies
\begin{itemize}
    \item a \emph{doubling property at scale $\rho$ and dimension $\nu$} if there exists a constant $c>0$ such that, for all $0<r\leq R\leq \rho $ and $x\in \M$, we have
    \[
    \frac{V(x,R)}{V(x,r)}\leq c\left(\frac{R}{r}\right)^{\nu}
    \]
    where $V(x,r)$ denotes the volume of the open ball $B(x,r)$ with center $x$ and radius $r$,
    \item an \emph{$L^2$-Poincaré inequality at scale $\rho$} if there are constants $c_1,c_2>0$ such that, for all $0<r\leq \rho$, all $x\in \M$, and all $\varphi\in W^{1,2}(\M,\dist,\m)$, we have 
    \[
    \inf_{a\in\RR}\int_{B(x,r)}|\varphi-a|^2\d\m\leq c_1r^2\int_{B(x,c_2 r)}|\nabla\varphi|^2\d\m.
    \]
    \end{itemize}

\begin{theorem}[Doubling and Poincaré for tamed metric measure spaces]\label{thm:MainA} 
Let $(\M,\dist,\m)$ be a locally compact length metric measure space with $\m(M)<\infty$ which is infinitesimally Hilbertian. Assume the Cheeger energy $\Ch$ admits a heat kernel and the induced intrinsic distance coincides with $\dist$. Moreover, let $(\M,\dist,\m)$ satisfy the bounded interpolation property. Lastly, assume there exist a Borel measure $\kappa$ and $n\in [1,\infty)$ such that $(\M,\dist,\m)$ satisfies $\BE(-\kappa,n)$ and there exist $T>0$ and $\delta \in [0, 1\wedge \frac{1}{2|n-2|})$ such that
\begin{equation*}
\k_T:=\k_T(\M,\dist,\m,\kappa) \le \delta \, \cdot 
\end{equation*}
Then there exist constants $c=c(\delta,n)\geq 1$ and $N = N(\delta,n) \geq n$ such that $(M,\dist,\m)$ satisfies a doubling property and an $L^2$-Poincaré inequality at scale $(T/(2\delta N))\wedge (\diam(\M)/c)$ and dimension $N$.
\end{theorem}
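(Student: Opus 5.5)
The plan follows the strategy announced in the abstract. We construct a bi-Lipschitz time change of $(\M,\dist,\m)$ into a space with a uniform lower Ricci bound, invoke doubling and Poincaré for $\RCD(K,N')$ spaces, and transfer these back.

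\emph{Step 1: gauge function.} Following the Kato-class literature (Carron, Carron--Mondello--Tewodrose), we build a function $w$ as a suitable integral of the heat semigroup against $\kappa$, for instance
\[
w = \int_0^{T} P^\kappa_t\,\d t
\]
or a Feynman--Kac/Schrödinger-type solution. The smallness $\k_T\le\delta$ gives $\|w\|_\infty\le C(\delta)$ and hence $w$ bounded above and below. It should solve, in the distributional sense, an equation $\Delta w \approx \kappa - \lambda$ with $\lambda\lesssim \delta/T$, up to a controlled error. We then consider the conformally/time-changed space $(\M,\dist_w,\m_w)$ with $\dist_w = e^{w}\dist$ (length-metric change) and $\m_w=e^{Nw}\m$ (or a suitable power). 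The boundedness of $w$ makes $\mathrm{id}$ bi-Lipschitz with constants depending only on $\delta$, and makes $\m_w$ comparable to $\m$.

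\emph{Step 2: curvature of the new space.} Using the transformation formula for the Bakry--Émery tensor under conformal changes and time changes in the tamed setting (from \cite{ERST} and subsequent work), we show that $(\M,\dist_w,\m_w)$ satisfies $\BE(K,N)$ with $K=-c\,\delta N/T$. Here $N=N(\delta,n)\ge n$. The idea is that the distributional Laplacian of $w$ absorbs the singular part $-\kappa$, while the gradient-squared terms of $w$ are handled by choosing exponents in terms of $n$. The condition $\delta<1\wedge\frac{1}{2|n-2|}$ should be exactly what makes the quadratic form in $|\nabla w|^2$ nonnegative after dimension increase; the $(n-2)$ factor appears in the conformal change formula. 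The bounded interpolation property together with infinitesimal Hilbertianity and the heat kernel/intrinsic-distance assumptions is used to upgrade $\BE(K,N)$ to $\RCD(K,N)$ via Gigli--Nobili/Ambrosio--Gigli--Savaré (Sobolev-to-Lipschitz property). This step is the main obstacle: it requires justifying the conformal change with only $W^{1,2}\cap L^\infty$ regularity of $w$ and a measure-valued Laplacian. I would first try to approximate $\kappa$ by regular potentials, or work with the self-improved Bochner inequality and a Leibniz rule for measure-valued Laplacians.

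\emph{Step 3: transfer.} On an $\RCD(K,N)$ space, Bishop--Gromov gives doubling
\[
\frac{V_w(x,R)}{V_w(x,r)}\le C(N)\Bigl(\frac{R}{r}\Bigr)^N
\]
for $R\le \sqrt{(N-1)/|K|}\sim T/(\delta N)$, and Rajala's local $L^2$-Poincaré inequality holds at the same scale. Bi-Lipschitz equivalence with constant $L(\delta)$ transforms balls into comparable balls, and $\m\simeq\m_w$. Energies are also comparable, since $|\nabla\varphi|_w = e^{-w}|\nabla\varphi|$. Hence both properties pass to $(\M,\dist,\m)$, with constants depending on $\delta,n$, at scale $T/(2\delta N)$, after adjusting radii by $L$ and the enlargement constant $c_2$. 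The cap $\diam(\M)/c$ accounts for balls exhausting $\M$ after the radius distortion.
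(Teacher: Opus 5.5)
Your overall architecture matches the paper's: a gauge function built from the Kato data, a time change with $0\le w\le C(\delta)$, a uniform $\BE(-K,N)$ condition on $(\M,\dist_w,\m_w)$, local doubling and Poincar\'e there, and transfer back through the bi-Lipschitz identity map and $\m\le\m_w\le e^{2\|w\|_\infty}\m$. The gap is the upgrade $\BE(-K,N)\Rightarrow\RCD(-K,N)$ for the time-changed space. All of your Step~3 (Bishop--Gromov, Rajala) rests on it. The Erbar--Kuwada--Sturm / Ambrosio--Gigli--Savar\'e equivalence needs the Sobolev-to-Lipschitz property of $(\M,\dist_w,\m_w)$. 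That property is not among the hypotheses, not even for $(\M,\dist,\m)$, and the authors state explicitly that they do not know whether it holds. The bounded interpolation property does not supply it either. Gigli--Nobili use BIP for the $p$-independence of minimal weak upper gradients and for stability, not for Sobolev-to-Lipschitz. Moreover, BIP concerns $\dist$-Wasserstein geodesics, so it is not even known for $\dist_w$. As written, your Step~2 has no route to $\RCD$, so Bishop--Gromov and Rajala's inequality are unavailable.

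The missing idea is to stay at the Bakry--\'Emery level throughout. The paper runs the Baudoin--Garofalo / Garofalo--Mondino semigroup argument, which uses only the Bochner inequality:
\begin{itemize}
\item the differential inequality for $\Phi^w(s)=\P^w_s\bigl(\P^w_{t-s}f\,|d\ln \P^w_{t-s}f|_w^2\bigr)$ (Proposition~\ref{prop:gauge});
\item the Li--Yau inequality (Proposition~\ref{prop:LiYau});
\item a parabolic Harnack inequality for $p^w$ (Lemma~\ref{lem:LYtimechange});
\item exponential integrability, $\P^w_{Ar^2}\One_{B_w(x,r)}\ge C$ on $B_w(x,r/2)$ (Proposition~\ref{lem:exp}).
\end{itemize}
From these come near-diagonal two-sided heat kernel bounds, $V_w(x,2r)\le c_n^{N}V_w(x,r)$, and the local $L^2$-Poincar\'e inequality, all of which are then pulled back to $(\M,\dist,\m)$.

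The only step that needs geodesic structure is the Harnack inequality, where Li--Yau is integrated along curves. In the $\RCD$ setting the required plans come for free; here this is exactly where BIP of the \emph{original} space enters. For small balls $B_r(y),B_r(z)$, BIP gives a plan $\pi^r$ of $\dist$-geodesics with uniformly bounded densities. This plan is also a test plan for $\m_w$, and as $r\to0$ its $\dist_w$-kinetic energy is bounded by a constant depending only on $\|w\|_\infty$ times $\dist^2(y,z)\le\dist_w^2(y,z)$. So BIP is used to manufacture test plans, not to upgrade the curvature condition.

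A secondary point: your gauge $\int_0^T P^\kappa_t\,\d t$ is not the right object. The paper takes $v=2\beta\int_0^\infty e^{-2\beta t}\P^{\lambda\kappa}_t\One_\M\,\d t$, which solves $-\Delta^{\lambda\kappa}v+2\beta v=2\beta$, and sets $w=\lambda^{-1}\ln v$ with $\lambda=\hat N$ (or $1/(2\k_T)$ if $n=2$). The logarithmic substitution, with $\lambda\ge\hat N$, neutralizes the $-\hat N\,\Gamma(w)\Gamma^w(f)$ term and turns the remaining terms into the equation for $v$. The smallness $\delta<1\wedge\frac{1}{2|n-2|}$ is what guarantees $\lambda\k_T\le 5/6<1$, hence that $v$ is bounded.
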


We derive the previous theorem by means of a suitable time change, as was done by Carron--Mondello--Tewodrose in the context of complete Riemannian manifolds \cite{CMT_KatoBE}; see also Bormann--Tewodrose \cite{BormannTewodrose} for the case of manifolds with boundary. The bounded interpolation property is used in the proof of Lemma \ref{lem:LYtimechange} where we derive a Harnack inequality for the time-changed space.

This approach yields quantified constants in the doubling and Poincaré properties, leading to the following precompactness statement.

\begin{theorem}[pmGH precompactness for tamed metric measure spaces]\label{th:MainB}
Consider $n \in [1,+\infty)$, $T \in (0,+\infty)$ and  $\delta \in [0, 1\wedge \frac{1}{2|n-2|})$.

\begin{enumerate}
    \item Let $\mathcal{M}(n,T,\delta)$ be the collection of isometry classes of pointed metric measures spaces $(M,\dist,\m,o)$ satisfying the assumptions of Theorem \ref{thm:MainA}. Then $\mathcal{M}(n,T,\delta)$ is precompact in the pointed measured Gromov--Hausdorff topology.
    \item For $\eta >0$, let $\mathcal{M}(n,T,\delta,\eta)$ be the collection of isometry classes of spaces $(M,\dist,\m,o,\Ch) \in \mathcal{M}(n,T,\delta)$ satisfying $\eta^{-1} \le \m(B_{\sqrt{T}}(o)) \le \eta$. Then $\mathcal{M}(n,T,\delta,\eta)$ is precompact in the pointed Mosco--Gromov--Hausdorff topology.
\end{enumerate}
\end{theorem}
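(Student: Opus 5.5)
The plan is to reduce both parts to Gromov's precompactness criterion and its Mosco refinement, using the uniform constants supplied by Theorem \ref{thm:MainA}. The key point is that the constants $c=c(\delta,n)$ and $N=N(\delta,n)$ do not depend on the individual space. Likewise the scale $\rho:=(T/(2\delta N))\wedge(\diam(\M)/c)$ is uniform, except for the dependence on the diameter.

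\textbf{Step 1: doubling at a fixed scale.} We first handle the diameter in $\rho$. If $\diam(\M)/c$ is small, the space itself is small. Then every ball of radius at least $\diam(\M)/c$ has comparable volume to the whole space, because by the length property any ball of radius $r$ is covered by a bounded number of balls of radius $r/2$ up to the scale $\diam/c$. So doubling at scale $\rho$ extends to all radii $r\le R\le \rho_0:=T/(2\delta N)$ (or, if $\delta=0$, to any fixed radius such as $\sqrt T$), with a constant depending only on $c,N$. This yields, for all $x$ and all $0<r\le R\le\rho_0$,
\[
V(x,R)\le C(\delta,n)\,(R/r)^N\,V(x,r).
\]

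\textbf{Step 2: uniform total boundedness and Gromov's theorem.} For each $R>0$ we apply a standard packing argument. A maximal $\varepsilon$-separated set in $B(o,R)$ has disjoint balls $B(x_i,\varepsilon/2)$. Iterating the local doubling along chains of length about $R/\rho_0$ in the length space gives $V(x_i,\varepsilon/2)\ge C(R,\varepsilon,\delta,n,T)^{-1}V(o,R)$. Hence the cardinality of the set is bounded by a constant independent of the space. Gromov's criterion then gives pointed GH precompactness. For the measures we normalize, as usual in pmGH with doubling: either we rescale $\m$ so that $\m(B(o,1))=1$, or we use the convention that pmGH in part 1 is up to normalization. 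The uniform doubling bounds $\m(B(o,R))$ uniformly on each ball, so Prokhorov in a common proper realization space gives weak convergence of the measures. In part 2 the normalization is replaced by the hypothesis $\eta^{-1}\le\m(B_{\sqrt T}(o))\le\eta$, which gives the same uniform local bounds without rescaling. The limit measure is nonzero and doubling, and the limit space is complete, locally compact and length.

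\textbf{Step 3: Mosco convergence of the Cheeger energies.} For part 2 we use the known theorem that a pmGH convergent sequence of spaces satisfying uniform local doubling and a uniform local $L^2$-Poincaré inequality has Cheeger energies converging in the Mosco sense, possibly up to a subsequence. This is the Cheeger--Colding / Koskela / Ambrosio--Honda type result. The subsequence yields a limiting Dirichlet form, identified with the limit Cheeger energy. The uniform Poincaré inequality at scale $\rho_0$ from Theorem \ref{thm:MainA} supplies the required hypothesis.

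The main obstacle is this last step. It requires checking that the compactness result applies in the infinitesimally Hilbertian, non-RCD setting with only local, scale-restricted Poincaré. The plan is to appeal to a subsequential Mosco compactness theorem for uniformly doubling–Poincaré sequences, in the spirit of Kuwae--Shioya or Cheeger's lower semicontinuity. The liminf inequality would come from the Poincaré-based Lipschitz approximation, and the recovery sequence would be built by transplanting test functions via $\varepsilon$-isometries and discrete convolution. A secondary concern is the degenerate case $\delta=0$, where $\rho_0=\infty$; there we simply work at any fixed scale.
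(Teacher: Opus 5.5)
Your route is the paper's. The authors prove Part 1 by feeding the space-independent doubling constants of Theorem~\ref{thm:MainA} into Gromov's compactness theorem. They prove Part 2 by citing the pointed Mosco--Gromov--Hausdorff compactness theorem for uniformly doubling--Poincaré Dirichlet spaces, \cite[Theorem 1.17]{CMT}. Your Steps 1--2 fill in details the paper skips.

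Your normalization remark is correct and worth keeping. The class is invariant under $\m\mapsto\lambda\m$, $\kappa\mapsto\lambda\kappa$: the heat semigroup and the additive functional are unchanged, so $\k_T$ is too. Hence Part 1 can only hold modulo such a normalization, which the paper leaves implicit.

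A small point: Theorem~\ref{thm:MainA} gives Poincaré only up to radius $\rho_0\wedge(\diam(M)/c)$, not up to $\rho_0$. Either extend it by chaining as you did for doubling, or pass to a subsequence with diameters bounded below. The case $\diam(M_i)\to 0$ is trivial, since the limit is a point and the limit form is zero.

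The step to delete is the claim in Step 3 that the limiting form is ``identified with the limit Cheeger energy'', together with the liminf argument meant to prove it. Uniform doubling and Poincaré alone give Mosco convergence, along a subsequence, only to \emph{some} Dirichlet form $\mathcal{E}$ on the limit. In general $\mathcal{E}$ is not the Cheeger energy of the limit space. Take the flat circle $\mathbb{R}/\mathbb{Z}$ with measures $a(ix)\d x$, $i\in\mathbb{N}$, where $a$ is continuous, nonconstant, $1$-periodic and $1\le a\le 2$. By one-dimensional homogenization, the Cheeger energies $\int a(ix)|f'|^2\d x$ Mosco-converge to $\int a_{\mathrm h}|f'|^2\d x$, with $a_{\mathrm h}$ the harmonic mean of $a$. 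The limit space, the flat circle with measure $\bar a\d x$, has Cheeger energy $\int \bar a|f'|^2\d x$, where the arithmetic mean $\bar a$ is strictly larger than $a_{\mathrm h}$. So a Poincaré-based Lipschitz approximation cannot give the liminf inequality with constant $1$: optimal recovery sequences oscillate.

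For tamed spaces this identification, i.e.\ continuity of the Cheeger energy, is exactly what the authors leave open. They expect it only under a stronger smallness condition on $\k_T$, which would upgrade precompactness to compactness. It is not needed, though. Pointed Mosco--Gromov--Hausdorff precompactness only asks for a limit $(X,d,\mu,o,\mathcal{E})$ with $\mathcal{E}$ a Dirichlet form, which is what the cited theorem delivers. Abstractly, this comes from Kuwae--Shioya $\Gamma$-compactness plus the asymptotic compactness provided by uniform doubling and Poincaré. With that claim removed, your proof of Part 2 goes through.
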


The first part of the statement is a direct consequence of Gromov's compactness theorem. The second one follows from a variant of this theorem including Mosco convergence of the Cheeger energies, see Carron--Mondello--Tewodrose \cite[Theorem 1.17]{CMT} and the references therein. 

It is worth pointing out that Theorem \ref{th:MainB} does not require the bounded interpolation property to be uniform over $\mathcal{M}(n,T,\delta)$. As shown by Gigli--Nobili \cite{GigliNobili}, the uniform bounded interpolation property is stable under pmGH convergence. It also implies an $L^1$-Poincaré inequality, as follows from Rajala's work \cite{Rajala}. However, the resulting constant in the latter inequality is a priori neither uniform nor dependent on curvature bounds.

The classes of spaces identified in Theorem \ref{th:MainB} are compact under the following circumstances. If the taming condition $\BE(-\kappa,n)$ hypothesized in Theorem \ref{thm:MainA} is stable under pmGH convergence, then the class $\mathcal{M}(n,T,\delta)$ becomes compact, provided we impose a uniform bound on the reference measures. In addition, we expect that a stronger condition on $\k_T$ implies continuity of the Cheeger energy, hence compactness of $\mathcal{M}(n,T,\delta,\eta)$.

\subsubsection*{Acknowledgments.} M.~Braun acknowledges financial support by the EPFL through a Bernoulli Instructorship. C.~Rigoni's research was funded by the Austrian Science Fund (FWF) [Grants: DOI 10.55776/EFP6, DOI:10.55776/ESP224]. C.~Rose gratefully acknowledges support by the DFG (grant no.: 540199605). D.~Tewodrose is funded by the Research Foundation – Flanders (FWO) via the Odysseus II programme no.~G0DBZ23N. M.~Braun and C.~Rose thank Vrije Universiteit Brussel for their hospitality during the finalization of the article. 

\section{Preliminaries}

In this paper, a metric measure space is a triple $(M,\dist,\m)$ where $(M,\dist)$ is a complete and separable metric space endowed with a fully supported Radon measure $\m$. We denote by $C(M,\dist)$ the space of $\dist$-continuous functions on $M$, by $\Lip(M,\dist)$ the space of all real-valued $\dist$-Lipschitz functions on $M$ and by
$\Lip_b(M,\dist)$,  resp.~$\Lip_c(M,\dist)$, the subspace of bounded, resp.~compactly supported, Lipschitz functions. For any $p \in [1,+\infty]$ we write $L^p(M,\m)$ for the Lebesgue space with exponent $p$ on $(M,\m)$, and $L^p_{\text{loc}}(M,\m)$ for the corresponding space of locally $p$-integrable functions. By $\mathcal{P}(M)$ we mean the space of probability measures on $(M,\dist)$. The indicator function of a set $A \subset M$ is denoted by $\One_A$. We shall often use the shorthand
\[
V(x,r) \df \m(B(x,r))
\]
where $B(x,r)$ is the open ball centered at $x$ with radius $r$. For this section, we consider a fixed metric measure space $(M,\dist,\m)$.\\

\paragraph{Length structure and absolutely continuous curves} We denote by $\contpath$ the space of $\dist$-continuous maps from $[0,1]$ to $M$. For any given $x,y \in M$, we let $\contpathxy$ be the space of  $\gamma \in \contpath$ such that $\gamma_0=x$ and $\gamma_1=y$. Recall that $(M,\dist)$ is called length if for any $x,y \in M$,
\[
\dist(x,y) = \inf \{ L(\gamma) : \gamma  \in \contpathxy \},
\]
where $L(\gamma)$ is the length of $\gamma$, defined as the usual supremum over partitions of $[0,1]$.

Let $\accurves$ denote the set of $L^2$-absolutely continuous curves in $M$. These are the elements $\gamma \in \contpath$ for which there exists $m \in L^2([0,1])$ such that
\begin{equation*}  
\dist(\gamma_t,\gamma_s) \le \int_t^s m(\tau) \d \tau
\end{equation*}
for any $t<s$ in $[0,1]$.
For such a map $\gamma$, the number
\[
|\dot{\gamma}_t| \df \lim_{s \to t} \frac{\dist(\gamma_s,\gamma_t)}{|t-s|}
\]
is a well-defined real number for a.e.~$t \in [0,1]$, and the function $|\dot{\gamma}|$  belongs to $L^2([0,1])$. This function is called the metric slope of $\gamma$ and it is the a.e.~minimal function $m$ satisfying the inequality above. We refer to \cite[Theorem 1.1.2]{AGS-book} for more details.

For any $x, y \in M$, we write $\accurvesxy$ for the set of $\gamma \in \accurves$ satisfying $\gamma_0=x$ and $\gamma_1=y$. Then the following holds.

\begin{lemma}
    The metric space $(M,\dist)$ is length if and only if for any $x,y \in M$,
\[
\dist(x,y) = \inf \left\{\int_0^1 |\dot{\gamma}_t| \d t : \gamma \in \mathrm{AC}_{xy}^2(M,\dist)\right\}.
\]
\end{lemma}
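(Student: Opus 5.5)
The plan is to compare the two infima directly. For $x,y\in M$, set
\[
\ell(x,y)\df\inf\{L(\gamma):\gamma\in\contpathxy\},\qquad a(x,y)\df\inf\Big\{\int_0^1|\dot\gamma_t|\d t:\gamma\in\accurvesxy\Big\}.
\]
Once we show $\ell=a$ identically, the statement follows immediately: $(M,\dist)$ is length exactly when $\dist=\ell$, which is then the same as $\dist=a$.

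The first step is the inequality $\ell\le a$. Every $\gamma\in\accurvesxy$ lies in $\contpathxy$, and for such a curve $L(\gamma)=\int_0^1|\dot\gamma_t|\d t$. One direction of this equality is elementary: for any partition $0=t_0<\dots<t_k=1$, the defining inequality with $m=|\dot\gamma|$ gives $\sum_i\dist(\gamma_{t_i},\gamma_{t_{i+1}})\le\int_0^1|\dot\gamma_t|\d t$. Taking the supremum over partitions yields $L(\gamma)\le\int_0^1|\dot\gamma_t|\d t$, which already suffices for $\ell\le a$. The reverse equality is the standard fact from \cite[Theorem 1.1.2]{AGS-book}, but we will not need it.

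The second step, which is the main one, is $a\le\ell$. Take $\gamma\in\contpathxy$ with $L(\gamma)<\infty$; curves of infinite length contribute nothing to the infimum. We reparametrize $\gamma$ by constant speed. Let $s(t)\df L(\gamma|_{[0,t]})$, which is nondecreasing and continuous; its continuity follows from the continuity of $\gamma$ and the additivity of length on subintervals. Then define $\tilde\gamma_\tau\df\gamma_t$ for any $t$ with $s(t)=\tau L(\gamma)$. This is well defined, because $\gamma$ is constant on any interval where $s$ is constant.

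We then check that $\dist(\tilde\gamma_\tau,\tilde\gamma_\sigma)\le L(\gamma)|\tau-\sigma|$, since the distance between two points of $\gamma$ is bounded by the length of the arc between them. Thus $\tilde\gamma$ is $L(\gamma)$-Lipschitz, so it belongs to $\accurvesxy$ with $m\equiv L(\gamma)\in L^2([0,1])$. Since $|\dot{\tilde\gamma}|$ is the a.e.-minimal such $m$, we get $\int_0^1|\dot{\tilde\gamma}_\tau|\d\tau\le L(\gamma)$. Taking the infimum over $\gamma$ gives $a\le\ell$.

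The only delicate point is this reparametrization: the continuity of $s$ and the well-definedness of $\tilde\gamma$ on the intervals where $\gamma$ pauses. These are classical and I would simply verify them directly as indicated. Combining the two steps gives $\ell=a$ and hence the lemma.
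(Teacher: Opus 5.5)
Your argument is correct. The paper states this lemma without any proof, as a standard preliminary right after recalling metric derivatives via \cite[Theorem 1.1.2]{AGS-book}, so there is no competing route to compare against. Your proof fills in the standard verification: the bound $L(\gamma)\le\int_0^1|\dot\gamma_t|\d t$ for $\gamma\in\accurvesxy$ gives one inequality, and the constant-speed reparametrization of a rectifiable curve in $\contpathxy$ gives the other.

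When you write out the reparametrization, make three points explicit:
\begin{enumerate}
\item Continuity of $s(t)=L(\gamma|_{[0,t]})$ uses $L(\gamma)<\infty$, in addition to the continuity of $\gamma$ and additivity, because one subtracts lengths of subarcs.
\item The map $s$ is onto $[0,L(\gamma)]$ by the intermediate value theorem. Hence $\tilde\gamma$ is defined on all of $[0,1]$, and choosing $t=0$, resp.\ $t=1$, gives $\tilde\gamma_0=x$ and $\tilde\gamma_1=y$.
\item The degenerate case $L(\gamma)=0$ simply yields the constant curve.
\end{enumerate}
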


\paragraph{Test plans and  minimal weak upper gradient} Recall some classical definitions due to Ambrosio--Gigli--Savaré \cite[Section 5.1]{AGSInv}. For any $t \in [0,1]$, we write $e_t : \mathcal{C}([0,1],M,\dist) \to M$ for the evaluation map at time $t$ defined by $e_t(\gamma)=\gamma(t)$. Endow the space $\mathcal{C}([0,1],M,\dist)$ with the distance of uniform convergence. Consider a Borel measure  $\mu$ on $(M,\dist)$, possibly different from $\m$. 

\begin{definition}
\begin{enumerate}
    \item We say that $\pi \in \mathcal{P}(\mathcal{C}([0,1],M,\dist))$ is a test plan relative to $\mu$ if
    \begin{enumerate}
        \item[(i)] $\pi(\accurves)=1$ and $\int \int_0^1 |\dot{\gamma}|^2(t) \d t \d \pi(\gamma) < +\infty$,
        \item[(ii)] there exists $C>0$ such that $(e_t)_\# \pi \le C \mu$ for any $t \in [0,1]$.
    \end{enumerate}

\item We say that $g \in L^2(M,\mu)$ is a weak upper gradient relative to $\mu$ of a Borel function $f:M \to \mathbb{R}$ if, for any test plan $\pi$ relative to $\mu$,
    \[
    |f(\gamma_1)- f(\gamma_0)| \le \int_0^1 g(\gamma(s)) |\dot{\gamma}|(s) \d s \qquad \text{for $\pi$-a.e.~$\gamma$.} 
    \]
    
\item The function $g$ as above with  minimal $L^2$ norm is called the minimal weak upper gradient relative to $\mu$ of $f$. We denote it by $|df|_\mu$.  If $\mu=\m$ we simply write $|df|$.

 \item The space $\mathcal{S}^2(M,\dist,\mu)$  of Borel functions on $M$ admitting a weak upper gradient relative to $\mu$ in $L^2(M,\mu)$ is called the Sobolev class of $(M,\dist,\mu)$.
 \end{enumerate}
\end{definition}

We will use a variant of \cite[Proposition 5.7]{AGSInv} that was used in \cite{GarofaloMondino-13}.

\begin{lemma}
Consider $f:M \times [0,1] \to \mathbb{R}$ such that $f(\cdot,t) \in  \mathcal{S}^2(M,\dist,\mu)$ for a.e.~$t\in (0,1)$. Then for any differentiable function $\alpha : [0,1] \to [0,1]$ and any test plan $\pi$ relative to $\mu$, the function $s \mapsto f(\gamma_s,\alpha_s)$ belongs to $W^{1,1}([0,1])$ for $\pi$-a.e.~$\gamma$, with
\[
\left| \frac{\d}{\d s} f(\gamma_s,\alpha_s)\right| \le |d[f(\cdot,\alpha_s]|_\mu (\gamma_s) |\dot{\gamma}_s| + |\dot\alpha_s| \left| \frac{\partial f}{\partial t} (\gamma_s,\alpha_s) \right|\qquad \text{for a.e.~$s\in (0,1)$.}
\]
\end{lemma}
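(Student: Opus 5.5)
The plan is to reduce to the one-variable chain rule for $W^{1,1}$ functions by splitting the increment of $s\mapsto f(\gamma_s,\alpha_s)$ into a spatial part and a temporal part. We work under the implicit regularity assumptions: $t\mapsto f(x,t)$ is absolutely continuous with $\partial_t f$ integrable along the relevant curves, and $(x,t)\mapsto |d[f(\cdot,t)]|_\mu(x)$ is jointly Borel with enough integrability, as in \cite[Proposition 5.7]{AGSInv} and \cite{GarofaloMondino-13}.

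Fix a test plan $\pi$ relative to $\mu$ and times $0\le s<s'\le 1$. Write
\[
f(\gamma_{s'},\alpha_{s'})-f(\gamma_s,\alpha_s)=\bigl[f(\gamma_{s'},\alpha_{s'})-f(\gamma_s,\alpha_{s'})\bigr]+\bigl[f(\gamma_s,\alpha_{s'})-f(\gamma_s,\alpha_s)\bigr].
\]

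For the second bracket, use the absolute continuity of $f(x,\cdot)$ in $t$:
\[
\Bigl|f(\gamma_s,\alpha_{s'})-f(\gamma_s,\alpha_s)\Bigr|\le\int_s^{s'}|\dot\alpha_r|\,\Bigl|\tfrac{\partial f}{\partial t}(\gamma_s,\alpha_r)\Bigr|\d r.
\]

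For the first bracket, apply the definition of weak upper gradient to the restricted plan $(\mathrm{restr}_s^{s'})_\#\pi$. This is again a test plan, since restriction and reparametrization preserve both the energy bound and the compression bound. Fix the time $\alpha_{s'}$; then for $\pi$-a.e.\ $\gamma$,
\[
|f(\gamma_{s'},\alpha_{s'})-f(\gamma_s,\alpha_{s'})|\le\int_s^{s'}|d[f(\cdot,\alpha_{s'})]|_\mu(\gamma_r)|\dot\gamma_r|\d r.
\]

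The null set here depends on $(s,s')$. We therefore first work with $s,s'$ in a countable dense set $D\subset[0,1]$, and obtain a single $\pi$-negligible exceptional set by countable union.

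Next, divide by $s'-s$ and let $s'\downarrow s$. The goal is the pointwise bound on the upper derivative at Lebesgue points $s$ of the relevant integrands, namely
\[
\limsup_{h\downarrow 0}\frac{|f(\gamma_{s+h},\alpha_{s+h})-f(\gamma_s,\alpha_s)|}{h}\le|d[f(\cdot,\alpha_s)]|_\mu(\gamma_s)|\dot\gamma_s|+|\dot\alpha_s|\,\Bigl|\tfrac{\partial f}{\partial t}(\gamma_s,\alpha_s)\Bigr|.
\]

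For the $W^{1,1}$ membership, it is more robust to integrate the two estimates instead: for all $s<s'$ in $D$,
\[
|f(\gamma_{s'},\alpha_{s'})-f(\gamma_s,\alpha_s)|\le\int_s^{s'}G_\gamma(r)\d r,
\]
with $G_\gamma\in L^1$. Fubini together with the bounds $(e_r)_\#\pi\le C\mu$ and $\int\!\!\int|\dot\gamma|^2<\infty$ shows that $\int G_\gamma\,\d\pi<\infty$, so $G_\gamma\in L^1(0,1)$ for $\pi$-a.e.\ $\gamma$. A function whose increments on a dense set are controlled by an $L^1$ function coincides a.e.\ with an absolutely continuous one. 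Hence it lies in $W^{1,1}(0,1)$ with $|\frac{\d}{\d s}|\le G_\gamma$ a.e. This is the standard argument in \cite[Proposition 5.7]{AGSInv}.

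The main obstacle is the mismatch of time slices in the spatial term. The integrand there is $|d[f(\cdot,\alpha_{s'})]|_\mu(\gamma_r)$, with frozen time $\alpha_{s'}$ rather than $\alpha_r$, so $G_\gamma$ as written depends on $s'$. I would resolve this with the partition argument of \cite{GarofaloMondino-13}. Take a partition $s=r_0<\dots<r_k=s'$, split the increment telescopically, and apply the above on each subinterval with frozen time $\alpha_{r_{i+1}}$. Then pass to the limit as the mesh goes to zero, using a continuity or lower semicontinuity property of $t\mapsto|d[f(\cdot,t)]|_\mu$ in $L^2(\mu)$ and Lebesgue's differentiation theorem. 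The limiting integrand is $|d[f(\cdot,\alpha_r)]|_\mu(\gamma_r)|\dot\gamma_r|+|\dot\alpha_r||\partial_tf(\gamma_r,\alpha_r)|$. This is precisely the step where the implicit hypothesis on the joint regularity of $f$ in time is needed. Once it is in place, differentiating the integral bound at Lebesgue points yields the stated inequality for a.e.\ $s\in(0,1)$.
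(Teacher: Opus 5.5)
The paper gives no proof of this lemma; it only refers to \cite[Proposition 5.7]{AGSInv} and its use in \cite{GarofaloMondino-13}. You are right that hypotheses are missing. For $f(x,t)=g(t)$ with $g$ not absolutely continuous and $\alpha_s=s$, every slice is constant, but $s\mapsto f(\gamma_s,\alpha_s)=g(s)$ is not in $W^{1,1}$. Splitting into a spatial and a temporal increment via restricted test plans is the right start. However, there is a genuine gap at the step you defer, which is the heart of the lemma. After telescoping over a partition $(r_i)$, you must show that the integrands $|d[f(\cdot,\alpha_{r_{i+1}})]|_\mu(\gamma_r)|\dot\gamma_r|$ and $|\dot\alpha_r|\,|\partial_t f(\gamma_{r_i},\alpha_r)|$ converge to the diagonal ones in $L^1(0,1)$ for $\pi$-a.e.\ $\gamma$.

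Lower semicontinuity of minimal weak upper gradients is of no use here. It bounds a limit of $|d[f(\cdot,t_k)]|_\mu$ from below, whereas you need the frozen-time integrals bounded from \emph{above} in the limit. What works is to assume $\|\,|d(f(\cdot,t)-f(\cdot,t'))|_\mu\|_{L^2(\mu)}\to0$ as $t'\to t$. Then use the pointwise inequality $\bigl|\,|dg|_\mu-|dh|_\mu\bigr|\le|d(g-h)|_\mu$ together with Cauchy--Schwarz, $(e_r)_\#\pi\le C\mu$ and $\int\!\int_0^1|\dot\gamma_r|^2\d r\d\pi<\infty$. This gives convergence in $L^1(\pi\otimes\mathcal{L}^1)$. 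With a nested (e.g.\ dyadic) family of partitions, one subsequence then works $\pi$-a.e.\ for all pairs at once.

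You also overlook that the temporal term has its own mismatch: the spatial point is frozen at $\gamma_{r_i}$ instead of $\gamma_r$. No property of $t\mapsto|d[f(\cdot,t)]|_\mu$ addresses this. You need, for example, to approximate $(x,r)\mapsto|\dot\alpha_r|\,|\partial_tf(x,\alpha_r)|$ in $L^1(\mu\otimes\mathcal{L}^1)$ by continuous functions, using the compression bound uniformly in the evaluation time.

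Second, your claim about dense sets is false. A function whose increments on a countable dense set $D$ are controlled by $\int G$ need not coincide a.e.\ with an absolutely continuous function with derivative bounded by $G$. Take $\One_A$, with $A$ a closed set of positive measure disjoint from $D$: it has zero increments on $D$. Since the slices $f(\cdot,t)$ are merely Borel, $s\mapsto f(\gamma_s,\alpha_s)$ need not be continuous, so its values off $D$ are not controlled.

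You need the estimate, for each fixed $s$, between $s$ and all points of $D$ for $\pi$-a.e.\ $\gamma$. Then apply Fubini in $(s,\gamma)$, using joint Borel measurability of $f$. Equivalently, obtain the estimate for $\mathcal{L}^2$-a.e.\ pair $(s,s')$ and combine it with the one-dimensional lemma from \cite{AGSInv} that turns such an estimate into $W^{1,1}$ regularity with derivative bounded by $G$.

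Finally, the frozen times $\alpha_{s'}$, $s'\in D$, may fall into the exceptional set of $t$ where $f(\cdot,t)\notin\mathcal{S}^2(M,\dist,\mu)$, for instance if $\alpha$ is constant. So the slice condition must hold for every $t$ in the range of $\alpha$.
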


\paragraph{Bounded interpolation property} Denote by $\mathrm{Geo}(M,\dist)$ the space of $\dist$-geodesics on $M$. These are the elements $\gamma \in \cC([0,1],M,\dist)$ such that $\dist(\gamma(s),\gamma(t)) = \dist(\gamma(0),\gamma(1)) |t-s|$ for any $s,t \in [0,1]$. We endow $\mathrm{Geo}(M,\dist)$ with the infinite norm topology inherited from $\cC([0,1],M,\dist)$.  For any $q \in (1,+\infty)$ and $\mu_0,\mu_1 \in \mathcal{P}(M)$ with finite $q$-moment, we let $\mathrm{Opt}_q(\mu_0,\mu_1)$ be the set of optimal couplings between $\mu_0$ and $\mu_1$ for the $q$-Wasserstein distance, and we define the set of dynamical optimal plans between $\mu_0$ and $\mu_1$ as
\[
\mathrm{OptGeo}_q(\mu_0,\mu_1) \df \{\pi \in \mathcal{P}(\mathrm{Geo(M,\dist)}) : (e_0,e_1)_\# \pi \in \mathrm{Opt}_q(\mu_0,\mu_1) \}.
\]

The bounded interpolation property  from \cite{GigliNobili} writes as follows.

\begin{definition}\label{Def:BIP}
    We say that $(M,\dist,\m)$ satisfies the bounded interpolation property (or is BIP, for brevity) if for any $q \in (1,+\infty)$ there exists $C_q : (0,+\infty) \to [1,+\infty)$ such that for any $\mu_0,\mu_1 \in \mathcal{P}(M)$ that are absolutely continuous with respect to $\m$ with bounded densities $\rho_0,\rho_1$, if $\mathrm{diam}(\mathrm{supp}(\mu_0) \cup \mathrm{supp}(\mu_1) ) < D$ for some $D>0$, then there exists $\pi \in \mathrm{OptGeo}_q(\mu_0,\mu_1)$ such that for any $t \in [0,1]$,
    \[
    (e_t)_\# \pi  \ll \m \quad \text{with} \quad \left\| \rho_t \right\|_\infty \le C_q(D)   \max \left( \left\| \rho_0\right\|_\infty,\left\| \rho_1 \right\|_\infty \right)
    \]
    where $\rho_t$ is the density of $(e_t)_\# \pi$ with respect to $\m$.
\end{definition}

\paragraph{Infinitesimally Hilbertian spaces} The Cheeger energy of the space $(M,\dist,\m)$ is the functional $\Ch:\Ltwo\to [0,\infty]$ defined by setting
\[\Ch (f) :=\int_M|d f|^2\d\m \quad \text{if $f \in \mathcal{S}^2(M,\dist,\mu),$} \qquad \text{ $\Ch (f)=+\infty$ otherwise.}\] 
 It is convex and lower semicontinuous. The associated Sobolev space is defined as
\[
W^{1,2}(M,\dist,\m) \df L^2(M,\m) \cap \mathcal{S}^2(M,\dist,\m) = \{f \in \Ltwo \, : \, \Ch(f) < +\infty\}.
\]
It is a Banach space when endowed with the norm
$$
\|f\|_{W^{1,2}}:=\left(\|f\|_{L^2}^2+\Ch(f)\right)^{1/2}.
$$
The next seminal definition is due to Gigli \cite{Gigli15}.

\begin{definition}
A metric measure space $(M,\dist,\m)$ is called 
infinitesimally Hilbertian if its Cheeger energy is quadratic or, equivalently, if the Sobolev space $(\Sob,\|\cdot\|^2_{W^{1,2}})$ is Hilbert.
\end{definition}

Infinitesimal Hilbertianity provides access to the well-established theory of Dirichlet forms. Indeed, from \cite[Theorem 4.18]{AGSDuke}, it follows that if $(M,\dist,\m)$ is infinitesimally Hilbertian, then
$$
\Gamma : ( f_1, f_2) \mapsto \lim_{\epsilon\to 0}\frac{|d (f_1+\epsilon f_2)|^2-|d f_1|^2}{2\epsilon}
$$
is a well-defined symmetric bilinear form on $\Sob$ with values in $L^1(\M,\dist,\m)$, with associated quadratic form still denoted by $\Gamma$. Of course, $$\Gamma(f)= |df|^2$$ $\m$-a.e.~on $M$ for any $f \in \Sob$. Moreover,
$$
\Ch(f_1,f_2) := \int_M \Gamma ( f_1,  f_2 ) \d \m
$$
defines a strongly local Dirichlet form on $(M,\dist,\m)$.  This has the following notable consequences. 

\begin{enumerate}
    \item The space $(M,\dist,\m)$ has a linear Laplace operator $\Delta$. This is the operator canonically associated with the Dirichlet form $\Ch$, with dense domain $\D(\Delta)$, defined by
\begin{align*}
    \Ch(u,v) = -\int_M u\,\Delta v\d\m
\end{align*}
for any $u \in \Sob$ and $v \in \D(\Delta)$. Note that here we take the convention that $\Delta$ is non-positive.
\item The space $(M,\dist,\m)$ has a heat semigroup $(\P_t)_{t > 0}$ defined as the self-adjoint sub-Markovian semigroup acting on $\Ltwo$ associated with $\Ch$. This semigroup extends to a positivity preserving, $\m$-symmetric, contraction semigroup on $L^p(\M,\dist,\m)$ for any $p \in [1,+\infty]$, which is strongly continuous when $p < +\infty$. In particular, for any $f \in L^\infty(\M,\dist,\m)$ and $t > 0$,
\[
\Delta \P_tf = \frac{\partial}{\partial t} \P_tf.
\]

\item The space $(M,\dist,\m)$ is upper regular in the sense of \cite[Definition 3.13]{AGSAnnProb}, see \cite[Section 8.3]{AGSIber}. This property ensures that the intrinsic distance associated with the Cheeger energy, defined by
\[
\dist_{\Ch} (x,y) \df \sup \{|f(x)-f(y)| : f \in \Sob \text{ with } \Gamma(f)\le 1 \text{ $\m$-a.e.} \}
\]
for any $x,y \in M$,
coincides with the initial distance $\dist$.
\item If there exists a family of $(\m\otimes\m)$-measurable functions $(p_t(\cdot,\cdot))_{t>0}$ on $M\times M$ such that for all $t>0$ and $f \in \Ltwo$, one has 
\[
\P_tf(x) = \int_Mp_t(x,y)f(y) \d \m(y) \qquad \text{for $\m$-a.e.~$x \in M$},
\]
then the function $p$ is called the heat kernel of $(M,\dist,\m)$. However we remark that the existence of such a heat kernel is not automatic. 
\end{enumerate}

\paragraph{Singular perturbation theory} For an infinitesimally Hilbertian  space $ (M,\dist,\m)$, we denote by $\mathcal{M}_0(M,\dist,\m)$ the space of non-negative Borel measures on $(M,\dist)$ which do not charge Cheeger-polar sets. The latter point means that any $\kappa \in \mathcal{M}_0(M,\dist,\m)$ is absolutely continuous with respect to the $2$-capacity associated with the Cheeger energy.

\begin{definition}\label{def:Kato}
For  an infinitesimally Hilbertian  space $ (M,\dist,\m)$, $\kappa \in \mathcal{M}_0(M,\dist,\m)$ and $t>0$, set
\begin{align*}
\k_T(\M,\dist,\m,\kappa)\df\text{q-}\sup_{x \in M} \mathbb{E}_x[A_t],
\end{align*}
where $A_t$ is the positive continuous additive functional associated with $\kappa$ (see \cite[Theorems 4.2.6 and 6.7.2]{CF} and \cite[Lemma 2.11]{ERST}) and q-$\sup$ is the quasi everywhere supremum associated with $\Ch$. Then the extended Kato class of $(M,\dist,\m)$ with parameters $T,c>0$ is defined as
\[
\K^-_{T,c}(\M,\dist,\m) \df \{ \kappa \in \mathcal{M}_0(M,\dist,\m) : \k_T(M,\dist,\m,\kappa)<c \}.
\]
\end{definition}

Measures in the extended Kato class of  an infinitesimally Hilbertian space define perturbed Cheeger energies, as given by the following result.

\begin{proposition}
    Let $ (M,\dist,\m)$ be an infinitesimally Hilbertian  space. Consider $\kappa \in \K^-_{T,c}(\M,\dist,\m)$ for some $T,c>0$. Then the quadratic form $\Ch^\kappa \df \Ch-\kappa$ is a closed, lower semibounded quadratic form.
\end{proposition}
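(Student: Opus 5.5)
The plan is to obtain the conclusion from the KLMN theorem. For that I need a relative form bound of the shape
\[
\int_M \tilde f^{\,2}\d\kappa \le a\,\Ch(f) + b\,\|f\|_{L^2}^2, \qquad f\in \Sob,
\]
where $\tilde f$ is a quasi-continuous representative. Such a bound gives closedness of $\Ch-\kappa$ on $\Sob$ provided $a<1$, and it gives the lower bound $\Ch-\kappa \ge -b$ in that case.

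\textbf{Step 1 (resolvent bound on $\kappa$).} Let $A$ be the positive continuous additive functional of $\kappa$. Split $[0,\infty)$ into the intervals $[kT,(k+1)T)$ and apply the Markov property at the times $kT$. This should give, for every $\alpha>0$ and quasi every $x$,
\[
\mathbb{E}_x\Big[\int_0^\infty e^{-\alpha t}\d A_t\Big] \le \sum_{k\ge 0} e^{-\alpha kT}\,\k_T = \frac{\k_T}{1-e^{-\alpha T}} =: a_\alpha .
\]
The left-hand side is the $\alpha$-potential $U_\alpha\kappa(x)$. So $U_\alpha\kappa \le a_\alpha$ holds quasi everywhere.

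\textbf{Step 2 (Stollmann--Voigt inequality).} For a measure $\kappa\in\mathcal{M}_0(M,\dist,\m)$ whose $\alpha$-potential is bounded, the standard Stollmann--Voigt argument applies. One writes $f=G_\alpha g$, pairs with $\kappa$, and uses Cauchy--Schwarz together with the energy identity for $U_\alpha\kappa$ (see \cite{CF}). This yields
\[
\int \tilde f^{\,2}\d\kappa \le \|U_\alpha\kappa\|_\infty\big(\Ch(f)+\alpha\|f\|_2^2\big).
\]
This step extends to all of $\Sob$ by density and quasi-continuity. With Step 1 it gives the bound above with $a=a_\alpha$ and $b=\alpha a_\alpha$.

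\textbf{Step 3 (closedness and semiboundedness).} Since $a_\alpha\downarrow \k_T$ as $\alpha\to\infty$, I can choose $\alpha$ large with $a_\alpha<1$ whenever $\k_T<1$. KLMN then shows that $\Ch-\kappa$ is closed on $\Sob$. Its form norm is equivalent to $\|\cdot\|_{W^{1,2}}$, and it is bounded below by $-\alpha a_\alpha/(1-a_\alpha)$ after rescaling. This is exactly the regime $\k_T<1$ in which the extended Kato class is used throughout the paper, e.g.\ $\k_T\le\delta<1$ in Theorem~\ref{thm:MainA}.

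\textbf{Main obstacle (general $c$).} I expect the difficulty to sit at the relative bound. The estimate in Step 1 cannot be pushed below $\k_T$, so the method needs $\k_T<1$. For $\kappa\in\K^-_{T,c}$ with $c>1$, the only information is $\k_T<c$, and this gives only $a_\alpha\to \k_T$, which may be $\ge 1$. In that case I would first try to sharpen Step 1 by using $\k_{T'}$ for smaller $T'$. However, $\k_{T'}\le \k_T$ need not tend to zero without the (non-extended) Kato assumption. So I do not expect the argument to close for $c>1$. My plan is therefore to prove the proposition for parameters with $\k_T<1$, which in particular covers every $c\le 1$. For larger $c$ I would flag that an additional smallness assumption of this type is needed for closedness via KLMN.
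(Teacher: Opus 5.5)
The paper gives no proof of this proposition; it is quoted as a known fact from the perturbation theory behind \cite{ERST}. Your argument is the standard proof and is correct whenever $\k_T<1$: the Markov-property bound $\|U_\alpha\kappa\|_\infty\le \k_T/(1-\e^{-\alpha T})$, then the Stollmann--Voigt inequality, then KLMN.

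Two small points:
\begin{itemize}
\item In Step~1 you should say why $X_{kT}$ almost surely avoids the capacity-zero set outside which $\mathbb{E}_y[A_T]\le\k_T$ holds. Enlarging it to a properly exceptional set does this, cf.\ \cite{CF}.
\item In Step~3 the lower bound $-\alpha a_\alpha$ follows directly from $\Ch-\kappa\ge(1-a_\alpha)\Ch-\alpha a_\alpha\|\cdot\|_2^2$. No rescaling is needed; your bound $-\alpha a_\alpha/(1-a_\alpha)$ is weaker but still valid.
\end{itemize}

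Your caveat about $c>1$ is not a weakness of your method: the statement as written is false for large $c$. Here is a counterexample.
\begin{itemize}
\item Setup: in $\RR^3$ with Lebesgue measure, let $\kappa=\lambda\sum_k r_k^{-2}\One_{B(x_k,r_k)}\,\mathcal{L}^3$, with $r_k\le 1/4$, $\sum_k r_k<\infty$, and the points $x_k$ at mutual distance at least $1$.
\item Kato bound: compare with the Green function $(4\pi|x-y|)^{-1}$ of $-\Delta$. By scale invariance, each bump contributes at most $\lambda/2$ to $\mathbb{E}_x[A_\infty]$. Away from its center it contributes only $O(\lambda r_k)$. Hence $\k_T\le C\lambda$ for every $T>0$.
\item Unboundedness: once $\lambda$ exceeds the first Dirichlet eigenvalue of the unit ball, pick $u\in C_c^\infty(B(0,1))$ with $\int|\nabla u|^2<\lambda\int u^2$. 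The rescaled functions $u((\cdot-x_k)/r_k)$ then have Rayleigh quotient for $\Ch-\kappa$ at most $r_k^{-2}\big(\int|\nabla u|^2-\lambda\int u^2\big)/\|u\|_2^2$, which tends to $-\infty$.
\end{itemize}
So $\kappa\in\K^-_{T,c}$ for every $c>C\lambda$, yet $\Ch-\kappa$ is unbounded below.

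The proposition must therefore be read with $c\le 1$, i.e.\ for the extended Kato class of \cite{ERST}. This is the only case the paper uses, since it always takes $c=c(n)\le 1$, and in that case your proof is complete.
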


In the following, we will denote by $\Delta^\kappa$ the generator of $\Ch^\kappa$.

\paragraph{Taming} In \cite{ERST}, the notion of a Dirichlet space with synthetic distributional Ricci curvature lower bounds was given. In particular, when the distribution is a measure and the Dirichlet form is the Cheeger energy of an infinitesimally Hilbertian metric measure structure, this notion writes as follows.

\begin{definition}\label{Def:BE}
Let $ (M,\dist,\m)$ be an infinitesimally Hilbertian space. For any $\kappa \in \mathcal{M}_0(M,\dist,\m)$  and $n \in [1,+\infty]$, we say that $(M,\dist,\m)$ satisfies $\mathrm{BE}(-\kappa,n)$ if
\begin{equation*}
\Gamma_2[f,\phi] \ge \frac{1}{n} \int_M \phi \,  (\Delta f)^2 \d \m - \int_M \phi  \Gamma(f) \d \kappa,
\end{equation*}
for every $f \in \D(\Delta)$ with $\Delta f \in \Sob$, and for every $\m$-a.e.~non-negative $\phi \in \D(\Delta^{2\kappa})$ with $\phi,\Delta^{2\kappa} \phi \in \Linf$, where
\begin{align*}
    \Gamma_2[f,\phi] := -\frac{1}{2}\int_\M\Gamma(\Gamma(f),\phi)\d\m - \int_\M\Gamma(f,\Delta f)\,\phi\d\m.
\end{align*}

\begin{remark}
    If $\kappa$ is absolutely continuous with respect to $\m$ with Radon--Nikodym derivative $\m$-a.e.~equal to a constant $K \in \mathbb{R}$, then the inequality defining $\BE(-\kappa,n)$ rewrites as the classical Bakry--\'Emery curvature-dimension condition \begin{equation*}
\Gamma_2[f,\phi] \ge  \int_M \phi  \left( \frac{(\Delta f)^2}{n}- K\Gamma(f)\right)\d \m.
\end{equation*}
\end{remark}
\end{definition}

\section{From taming to Bakry-\'Emery}

In this section, we show that an infinitesimally Hilbertian space satisfying the taming condition from Definition~\ref{Def:BE} with a suitable measure $\kappa$ is bi-Lipschitz equivalent to a metric measure space satisfying the Bakry--\'Emery curvature-dimension condition $\BE(K,N)$ for some $K \in \mathbb{R}$ and $N\ge n$.

\subsection{Preliminaries on time changes}

We refer to \cite[Chapter 5]{CF} for the definition and main properties of time changes in the general setting of Dirichlet spaces. Here we focus on the case of infinitesimally Hilbertian length spaces, which allows for some simplifications, see e.g.~\cite{SturmGAFA,hansturm}. For such a space $(M,\dist,\m)$, the space of local Sobolev functions is defined as
\[
\Sobloc \df \{f \in \Ltwoloc : \varphi f \in \Sob \text{ for any } \varphi\in \Lip_c(M,\dist)\}.
\]

\begin{definition}
Let $(M,\dist,\m)$ be an infinitesimally Hilbertian length space. For any $w \in L^\infty_{\loc}(M,\m)$, the associated time-change metric measure space $ (M,\dist_w,\m_w)$ is defined by
\begin{align*}
\dist_w(x,y) & := \sup \left\{ \varphi(x) - \varphi(y) : \varphi \in \Sobloc \cap C(M,\dist) \text{ s.t.~} \Gamma(f) \le e^w \, \m\right\}
\end{align*}
for any $x, y \in M$, and 
$$\m_w  := e^{2w} \m.$$
\end{definition}

In the rest of the paper, all objects associated with $(M,\dist_w,\m_w)$ will be written with a sub- or superscript $w$ without further notice.

\begin{remark}
If the function $w$ is continuous $\m$-a.e., then the metric $\dist_w$ admits the following dual representation: for any $x,y \in M$,
\begin{align*}
\dist_w(x,y) & := \inf \left\{\int_0^1 e^{\bar{w}(\gamma(t))}|\dot{\gamma}_t| \d t : \gamma \in \mathrm{AC}_{xy}(M,\dist)\right\},
\end{align*}
where $\bar{w}$ is the upper semicontinuous envelope of $w$.
\end{remark}

The next proposition collects several elementary properties of time-change spaces in our setting.

\begin{proposition}
Let $(M,\dist,\m)$ be an infinitesimally Hilbertian length space. Consider $w \in L^\infty_{\text{loc}}(M,\m)$  continuous $\m$-a.e.~on $M$.

\begin{enumerate}
\item The space $(M,\dist_w,\m_w)$ is an infinitesimally Hilbertian length metric measure space that satisfies upper regularity.

\item The vector spaces $W^{1,2}(M,\dist_w,\m_w)$ and $\Sob$ coincide, and for any $f \in W^{1,2}(M,\dist_w,\m_w)$,
\begin{equation*}
    \Gamma^w(f) = e^{-2w} \Gamma(f) \qquad \text{$\m$-a.e.~on $M$.}
    \end{equation*}
   
\item The Cheeger energies $\Ch^w$ and $\Ch$ coincide.

\item For any $f \in \D(\Delta)\cap L^\infty(\M,\m)$ such that $\Delta f \in \Sob \cap L^\infty(\M,\m)$, and $\m$-a.e.~non-negative $\phi \in \Sob\cap L^\infty(\M,\m)$, it holds
\begin{align*}
\Gamma_2^w[f,\phi]  
& =  \Gamma_2[f,e^{-2w}\phi]  + \int_M \Gamma(w,\Gamma(f)e^{-2w} \phi) \d \m \\
& - 2 \int_M\big( \Gamma(\Gamma(f),w) - \Gamma(w)\Gamma(f) - \Gamma(f,w)\Delta f \big) e^{-2w} \phi \d \m.
\end{align*}
\end{enumerate}
\end{proposition}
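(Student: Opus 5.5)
The proof splits along the four items. Items 1--3 come from the dual representation of $\dist_w$ in the preceding remark and from the fact that $w$ is locally bounded. Item 4 is a direct computation built on item 2.

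For item 1, local boundedness of $w$ gives $e^{-\|w\|_{L^\infty(K)}}\dist \le \dist_w \le e^{\|w\|_{L^\infty(K)}}\dist$ locally, for compact $K$. Hence $\dist_w$ induces the same topology as $\dist$, and the two metrics have the same absolutely continuous curves. Moreover, $|\dot\gamma_t|_{\dist_w} = e^{w(\gamma_t)}|\dot\gamma_t|$ for a.e.~$t$. This uses that $w$ is continuous $\m$-a.e., so that along test plans (which have bounded compression) $\bar w = w$ holds $\pi\otimes\mathcal{L}^1$-a.e. The length property follows from the infimum representation. Completeness and separability also follow, since the Borel structures agree and $\m_w$ is Radon and fully supported.

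For item 2, I would compare test plans and weak upper gradients. Since $\m_w = e^{2w}\m$ with $e^{2w}$ locally bounded above and below, a plan is a test plan relative to $\m_w$ if and only if it is one relative to $\m$, at least after localizing via restriction to curves staying in a compact set. Now take a weak upper gradient $g$ of $f$ for $\dist$. Then
\[
\int g(\gamma_s)|\dot\gamma_s|\d s = \int g e^{-w}(\gamma_s)\,|\dot\gamma_s|_{\dist_w}\d s,
\]
so $ge^{-w}$ is a weak upper gradient for $\dist_w$, and conversely. By minimality, $|df|_w = e^{-w}|df|$, which gives $\Gamma^w(f)=e^{-2w}\Gamma(f)$. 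Item 3 then follows at once:
\[
\int \Gamma^w(f)\d\m_w = \int \Gamma(f)\d\m.
\]
Integrability is preserved once we pass through $W^{1,2}_{\loc}$ and approximation. Quadraticity of $\Ch^w=\Ch$ gives infinitesimal Hilbertianity. Upper regularity then follows from the cited result of Ambrosio--Gigli--Savaré, since the intrinsic distance of $\Ch^w$ is $\dist_w$ by definition and item 2.

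For item 4, I first identify $\Delta^w$. From $\int \Gamma^w(u,f)\d\m_w = \int \Gamma(u,f)\d\m = -\int u\,\Delta f\d\m$ we get $\Delta^w f = e^{-2w}\Delta f$. I then expand
\[
\Gamma^w_2[f,\phi] = -\tfrac12\int \Gamma^w(\Gamma^w(f),\phi)\d\m_w - \int \Gamma^w(f,\Delta^w f)\,\phi\d\m_w.
\]
Using item 2 and $\Gamma^w(\cdot,\cdot)e^{2w} = \Gamma$, the first term becomes
\[
-\tfrac12\int\Gamma\big(e^{-2w}\Gamma(f),\phi\big)\d\m.
\]
The second term becomes
\[
-\int \Gamma(f,e^{-2w}\Delta f)\,e^{-2w}\phi\d\m.
\]
Next apply the Leibniz rule: $\Gamma(e^{-2w}h,\cdot) = e^{-2w}\Gamma(h,\cdot) - 2e^{-2w}h\,\Gamma(w,\cdot)$. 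Then move the factor $e^{-2w}$ onto $\phi$, via $\Gamma(\Gamma f, e^{-2w}\phi) = e^{-2w}\Gamma(\Gamma f,\phi) - 2e^{-2w}\phi\,\Gamma(\Gamma f,w)$, so as to recover $\Gamma_2[f,e^{-2w}\phi]$. The leftover terms are $\Gamma(w,\Gamma(f))e^{-2w}\phi$, $\Gamma(w)\Gamma(f)e^{-2w}\phi$ and $\Gamma(f,w)\Delta f\,e^{-2w}\phi$, and collecting them gives the stated formula.

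I expect the main obstacle to be regularity, not the algebra. The computation needs $\Gamma(f)\in\Sob$, which follows from the self-improvement results of Savaré-type arguments in tamed spaces. It also needs $w$ itself to lie in $\Sob_{\loc}$ so that $\Gamma(w,\cdot)$ makes sense; implicitly $w$ must be taken in $\Sobloc$ for item 4 to be meaningful. I would first prove the identity for such $w$ and bounded $f,\Delta f$, justifying each chain rule for $e^{-2w}$ on the range of $w$ where it is bounded.
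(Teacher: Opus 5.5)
Your overall mechanism is the standard one; the paper gives no proof of its own, calling these properties elementary and pointing to Sturm and Han--Sturm. It consists of $|\dot\gamma_t|_w=e^{w(\gamma_t)}|\dot\gamma_t|$ along test plans (via $\m$-a.e.\ continuity and bounded compression), the bijection $g\mapsto e^{-w}g$ between weak upper gradients, $\Delta^w=e^{-2w}\Delta$, and Leibniz rules. However, two steps in items 1--3 fail as written. Completeness of $\dist_w$ does not follow from the topologies or Borel structures agreeing. And no passage through $W^{1,2}_{\loc}$ and approximation can make $L^2(M,\m)$ and $L^2(M,\m_w)$ agree.

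In fact, for $w$ merely in $L^\infty_{\loc}$ these items are false. Take $M=\mathbb{R}$ with Lebesgue measure and $w(x)=x$. Then $\dist_w(x,y)=|e^x-e^y|$, so $(\mathbb{R},\dist_w)$ is isometric to $(0,\infty)$ and is not complete. Moreover, $f(x)=e^{-|x|}$ belongs to $W^{1,2}(\mathbb{R},\dist,\m)$ while $\int f^2e^{2x}\d x=\infty$. Under local boundedness, the localized version of your argument only gives $|df|_w=e^{-w}|df|$ and $\int|df|_w^2\d\m_w=\int|df|^2\d\m$ for locally Sobolev $f$, not the global statements.

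You need $w\in\Linf$, which is all the paper ever uses: $0\le w\le 4\k_T$, and Proposition~\ref{thm:timechange} assumes $w\in\Linf$. Then $e^{-\|w\|_\infty}\dist\le\dist_w\le e^{\|w\|_\infty}\dist$ globally. The test plans of the two structures coincide without localization, $L^2(M,\m)=L^2(M,\m_w)$ with equivalent norms, and your argument closes items 1--3. Upper regularity is then automatic for an infinitesimally Hilbertian space, as recalled in the preliminaries. It is what identifies the intrinsic distance with $\dist_w$, not a consequence of that identification.

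In item 4 you are right that $w\in\Sobloc$ must be added. But $\Gamma(f)\in\Sob\cap\Linf$ cannot be imported from self-improvement in tamed spaces, since the proposition has no curvature hypothesis. It has to be assumed: it is implicit in the paper's definition of $\Gamma_2$, and boundedness is needed to integrate terms such as $\Gamma(f)\Gamma(w,\phi)$.

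There is also a computational slip. The density $e^{2w}$ of $\m_w$ cancels the $e^{-2w}$ in $\Gamma^w$, so the second term is $-\int\Gamma(f,e^{-2w}\Delta f)\,\phi\d\m$, with no extra $e^{-2w}$. With your factor you would produce $e^{-4w}\Gamma(f,\Delta f)\phi$, which cannot be matched with $\Gamma_2[f,e^{-2w}\phi]$. With the correct factor, your two Leibniz steps give
\[
\Gamma_2^w[f,\phi]=\Gamma_2[f,e^{-2w}\phi]-\int_M\Gamma(\Gamma(f),w)\,e^{-2w}\phi\d\m+\int_M\Gamma(f)\,e^{-2w}\,\Gamma(w,\phi)\d\m+2\int_M\Gamma(f,w)\,\Delta f\,e^{-2w}\phi\d\m .
\]
So the third leftover term is $\Gamma(f)e^{-2w}\Gamma(w,\phi)$, not $\Gamma(w)\Gamma(f)e^{-2w}\phi$. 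The stated form, with coefficient $-2$ in front of $\Gamma(\Gamma(f),w)$ and the term $+2\Gamma(w)\Gamma(f)$, appears only after the further expansion
\[
\Gamma\big(w,\Gamma(f)e^{-2w}\phi\big)=e^{-2w}\phi\,\Gamma(w,\Gamma(f))-2\,\Gamma(w)\Gamma(f)\,e^{-2w}\phi+\Gamma(f)\,e^{-2w}\,\Gamma(w,\phi).
\]
This step requires $\Gamma(f)e^{-2w}\phi\in\Sob$ and $\Gamma(w)\phi\in L^1(M,\m)$, for instance $\phi$ compactly supported as in Proposition~\ref{thm:timechange}.
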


We shall also need the following result.

\begin{lemma}\label{lem:HKtimechange}
    Let $(M,\dist,\m)$ be an infinitesimally Hilbertian length space admitting a heat kernel $p$. Consider $w \in L^\infty(\M,\m)$  continuous $\m$-a.e.~on $M$. Then $(M,\dist_w,\m_w)$ admits a heat kernel $p^w$ as well.
\end{lemma}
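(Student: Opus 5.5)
The plan is to exploit the fact that, for bounded $w$, the time-changed space is a bounded perturbation of the original one at the level of both measures and quadratic forms, and to transfer the existence of a heat kernel through a domination argument. Since $w\in L^\infty(\M,\m)$, the weight satisfies $e^{-2\|w\|_\infty}\m\le \m_w \le e^{2\|w\|_\infty}\m$. In particular $L^2(M,\m)$ and $L^2(M,\m_w)$ coincide with equivalent norms. By the Proposition on time changes, $\Ch^w=\Ch$ on the common domain $W^{1,2}(M,\dist_w,\m_w)=\Sob$. Hence the semigroup $\P^w_t$ is the semigroup generated by the same Dirichlet form $\Ch$, now considered on the Hilbert space $L^2(M,\m_w)$.

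The criterion I would use for the existence of a heat kernel is ultracontractivity-type, or more simply absolute continuity. A sub-Markovian symmetric semigroup admits a measurable kernel as soon as each $\P^w_t$ maps $L^1$ boundedly into $L^\infty$, or more generally is dominated in a suitable sense. Rather than relying on ultracontractivity (which is not assumed), I would use the general characterization from \cite{CF}: a kernel exists if and only if $\P_t^w\One_N=0$ for every $\m_w$-null set $N$, jointly measurably in $t$. This is the absolute continuity of the transition function. The cleanest approach is probabilistic. The Hunt process associated with $(\Ch,L^2(M,\m_w))$ is the time change of the Hunt process $X$ of $(\Ch,\Ltwo)$ by the positive continuous additive functional $A_t=\int_0^t e^{2w}(X_s)\d s$ (cf.~\cite[Chapter 5]{CF}). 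The time-changed process is $Y_t=X_{\tau_t}$ with $\tau=A^{-1}$, and $\tau_t$ is comparable to $t$ with constants $e^{\pm2\|w\|_\infty}$.

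The key step is the computation for an $\m$-null (equivalently $\m_w$-null) Borel set $N$. One has
\[
\int_0^\infty e^{-t}\,\mathbb{E}_x[\One_N(Y_t)]\d t=\mathbb{E}_x\Big[\int_0^\infty e^{-A_s}\One_N(X_s)e^{2w(X_s)}\d s\Big]\le e^{2\|w\|_\infty}\int_0^\infty \P_s\One_N(x)\d s\cdot(\text{damped}).
\]
The right-hand side vanishes because $X$ has a heat kernel, so its transition function is absolutely continuous. Thus the resolvent of $Y$ is absolutely continuous with respect to $\m_w$. By the standard result \cite[Theorem 4.2.4/Lemma A.2.x in \cite{CF}]{CF}, absolute continuity of the resolvent together with symmetry and strong continuity yields absolute continuity of the transition function. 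This in turn yields a jointly measurable symmetric kernel $p^w_t(x,y)$ via the Radon--Nikodym construction and the Chapman--Kolmogorov regularization. The damping factor is $e^{-A_s}\le e^{-s e^{-2\|w\|_\infty}}$, which keeps the integral finite.

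I expect the main obstacle to be the last passage, from absolute continuity of resolvents to a genuine $(\m_w\otimes\m_w)$-measurable kernel representing $\P^w_t$ for every $t>0$. This requires care with quasi-everywhere versus everywhere statements and with exceptional sets. I would handle it by working with the properly exceptional set of $X$, which is also properly exceptional for $Y$ since the form and capacity are the same. I would then invoke the classical theorem (Fukushima--Oshima--Takeda) that a symmetric Hunt process with absolutely continuous resolvent admits a transition density off a properly exceptional set.
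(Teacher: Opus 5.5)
Your proposal is correct and follows essentially the paper's route. Both realize $(\M,\dist_w,\m_w)$ through the time change of the Hunt process by a PCAF with bounded density, write the time-changed resolvent via \cite[Lemma A.3.10]{CF}, and use the heat kernel of $(\M,\dist,\m)$ to show $R^w_\alpha\One_N=0$ for $\m_w$-null $N$. Your density $e^{2w}$, whose Revuz measure is $\m_w$, is the right one where the paper writes $w$, and the passage from an absolutely continuous resolvent to a jointly measurable kernel, which you flag as the delicate point, is exactly what the paper leaves implicit in ``therefore the heat kernel $p^w$ exists''.
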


\begin{proof}
By \cite[Lemma A.3.10]{CF}, the resolvent of the time-changed Dirichlet form is given by
\begin{align*}
    R_\alpha f(x) = \mathbf{E}^x\left[ \int_0^\infty e^{-\alpha A_t}\,f(X_t)\d A_t \right].
\end{align*}
Since the positive continuous additive functional is given by $A_t = \int_0^t w(X_s)\d s$, the previous term writes as
\begin{align*}
    R_\alpha f(x) = \mathbf{E}^x \left[\int_0^\infty e^{-\alpha \int_0^t w(X_s)\d s} f(X_t)\,w(X_t)\d t \right].
\end{align*}
Now, define a set function $B\mapsto R_\alpha 1_B(x)$. Let $B$ with $\m_w[B]=0$ (hence $\m[B]=0$). Then
\begin{align*}
    R_\alpha 1_B(x) = \mathbf{E}^x\left[\int_0^\infty e^{-\alpha\int_0^t w(X_s)\d s} 1_B(X_t)\,w(X_t)\d t\right] & \leq C \mathbf{E}^x\left[\int_0^\infty e^{\alpha Ct}\,1_B(X_t)\d t\right] \\
    & = C\int_0^\infty e^{\alpha Ct}\,\P_t1_B(x)\d t=0
\end{align*}
because
\begin{align*}
    \P_t1_B(x) = \int_Bp_t(x,y)\d \m(y) = 0.
\end{align*}
This means that the resolvent is absolutely continuous with respect to $\m_w$, and therefore, the heat kernel  $p^w$ exists.
\end{proof}

For an infinitesimally Hilbertian space $(M,\dist,\m)$, we define the following sets of test functions:
\begin{align*}
\Test(\M,\dist,\m) &:= \{f\in \D(\Delta) \cap L^\infty(\M,\m) : \Gamma(f) \in L^\infty(\M,\m),\,\Delta f\in \Sob\},\\
\Test_{c}(\M,\dist,\m) &:= \{f\in\Test(\M) \text{ with compact support}\}\\
\Test_\infty(\M,\dist,\m) &:= \{f\in\Test(\M) : \Delta f \in L^\infty(\M,\m)\}.
\end{align*}
They are all dense in $\Sob$, hence in $\Ltwo$, relative to the respective topologies.

The next result expresses how the taming condition is transformed under time-change. The proof is essentially the same as that for its predecessor \cite[Thm.~4]{hansturm}. Our hypotheses on the weight are more general, as we integrate by parts once less.

\begin{proposition}\label{thm:timechange} Let $(M,\dist,\m)$ be an infinitesimally Hilbertian length space. For $T,c>0$ and $n \in [1,+\infty)$, assume there exists $\kappa \in \K^-_{T,c}(\M,\dist,\m)$ such that $(M,\dist,\m)$ satisfies $\BE(-\kappa,n)$. Consider $w\in L^\infty(\M,\m)$ such that $\Gamma(w)\in L^1_\loc(\M,\m)$, and $N \ge 1$ such that  $\hat{N} \df \frac{(n-2)(N-2)}{N-n}$ is non-negative.
Then
\begin{align*}
    \Gamma_2^w[f,\phi]  \geq & - \hat{N}\int_\M \,\Gamma(w)\,\Gamma^w(f)\,\phi\d\m + \int_\M\Gamma(w,\Gamma^w(f)\,\phi)\d\m \\
    &   - \int_\M\Gamma^w(f)\,\phi\d \kappa + \frac{1}{N}\int_\M(\Delta^wf)^2\,\phi\d\m_w,
\end{align*}
for any $f\in\Test_\infty(\M)$ and any $\m$-a.e.~non-negative $\phi\in\Test_c(\M)$.
\end{proposition}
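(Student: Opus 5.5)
Starting from item 4 of the preceding proposition on time-change spaces, we write
\[
\Gamma_2^w[f,\phi] = \Gamma_2[f,e^{-2w}\phi] + \int_M \Gamma(w,\Gamma(f)e^{-2w}\phi)\d\m - 2\int_M\big(\Gamma(\Gamma(f),w) - \Gamma(w)\Gamma(f) - \Gamma(f,w)\Delta f\big)e^{-2w}\phi\d\m,
\]
valid for $f\in\Test_\infty(\M)$ and $\phi\in\Test_c(\M)$ non-negative. Since $\Gamma^w(f)=e^{-2w}\Gamma(f)$, the second term is exactly $\int_M\Gamma(w,\Gamma^w(f)\phi)\d\m$, which appears unchanged in the claimed bound. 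The strategy is to bound the remaining terms from below, in the spirit of \cite[Thm.~4]{hansturm}.

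\textbf{Step 1: apply the taming condition.} We apply $\BE(-\kappa,n)$ with test function $\psi:=e^{-2w}\phi$. This needs $\psi$ to be admissible, i.e.\ $\psi\in\D(\Delta^{2\kappa})$ with bounded Laplacian. This need not hold for merely bounded $w$ with $\Gamma(w)\in L^1_\loc$. I would therefore first use a self-improved (or approximated) version of $\BE(-\kappa,n)$ valid for non-negative $\psi\in\Sob\cap L^\infty$ with compact support, obtained by mollifying $\psi$ via the perturbed semigroup $\P^{2\kappa}_t$ and passing to the limit. Here $\Gamma_2[f,\cdot]$ is continuous in $\Sob$ for $f\in\Test_\infty$, and $\int\psi\Gamma(f)\d\kappa$ passes to the limit because $\kappa$ is Kato (Kato measures are form-bounded, so they act continuously on $\Sob$). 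This gives
\[
\Gamma_2[f,e^{-2w}\phi]\ge \frac1n\int(\Delta f)^2e^{-2w}\phi\d\m - \int\Gamma^w(f)\phi\d\kappa ,
\]
which produces the $\kappa$ term of the statement.

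\textbf{Step 2: a self-improvement estimate.} The term $\Gamma(\Gamma(f),w)$ has to be controlled by pointwise algebra. I would use the self-improvement $\Gamma(\Gamma(f))\le 4\,\Gamma(f)\,\Gamma_2^*(f)$, or rather its dimensional refinement $\Gamma(\Gamma(f))\le 4\Gamma(f)(\Gamma_2(f)-\frac1n(\Delta f)^2+\kappa\Gamma(f))$ at the measure level, available for tamed spaces as in \cite{ERST}. I expect this to be the \textbf{main obstacle}: its measure-valued nature must be handled, with a possible singular part in $\Gamma_2$, and so must the fact that $\phi$ multiplies a measure. If it is unavailable, I would instead redistribute terms as Han--Sturm do: rewrite $\Gamma(\Gamma(f),w)e^{-2w}\phi$ by integrating by parts into a term of the form $\int\Gamma(w,\Gamma^w(f)\phi)$ plus lower-order pieces, and keep a fraction of the Hessian-type quantity. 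A cruder alternative is to keep the $\Gamma(\Gamma(f),w)$ term but absorb it via Cauchy--Schwarz against part of $\Gamma_2$.

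\textbf{Step 3: the dimension-change algebra.} The time-changed Laplacian is $\Delta^wf=e^{-2w}(\Delta f+(2-2)\ldots)$; for general dimension the correct identity (from the Dirichlet form) is $\Delta^w f = e^{-2w}\Delta f$. The cross term $\Gamma(f,w)\Delta f$ is where the dimension enters. Writing $a=\Delta f$ and $b=\Gamma(f,w)$, we must show
\[
\tfrac1n a^2+4ab+\ldots\ge \tfrac1N(\Delta^w f)^2e^{4w}-\hat N\,\Gamma(w)\Gamma(f).
\]
This uses the elementary inequality $\frac1n a^2 + c\,ab \ge \frac1N a^2 - \frac{c^2 nN}{4(N-n)}b^2$, together with $b^2\le\Gamma(f)\Gamma(w)$. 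The constant is matched with $\hat N=\frac{(n-2)(N-2)}{N-n}$ after combining with the $+2\Gamma(w)\Gamma(f)$ term; the case $N=n$ forces $n=2$, consistent with the assumption $\hat N\ge0$. Collecting terms and using $\m_w=e^{2w}\m$ to write $\frac1N\int(\Delta^wf)^2\phi\d\m_w$ finishes the proof.
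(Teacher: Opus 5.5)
There is a genuine gap, and it sits in Step 2, which carries the whole content of the statement. Your overall skeleton is the right one: item 4 of the time-change proposition, then $\BE(-\kappa,n)$ tested against $e^{-2w}\phi$, then Young's inequality on the cross term. The gap is the following.

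\emph{The proposed estimate is false.} The ``dimensional refinement'' $\Gamma(\Gamma(f))\le 4\Gamma(f)\big(\Gamma_2(f)-\tfrac1n(\Delta f)^2+\kappa\Gamma(f)\big)$ fails already in $\mathbb{R}^n$, which satisfies $\BE(0,n)$. Take $f=|x|^2/2$: then $\Gamma_2(f)=\tfrac1n(\Delta f)^2=n$, so the right-hand side vanishes, while $\Gamma(\Gamma(f))=4|x|^2$.

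\emph{The correct version does not suffice.} The non-dimensional estimate $\Gamma(\Gamma(f))\le 4\Gamma(f)(\Gamma_2(f)+\kappa\Gamma(f))$ is true. But Step 1 has already spent all of $\Gamma_2$ on $\tfrac1n(\Delta f)^2$, so you cannot add it on top. More fundamentally, no splitting of $\Gamma_2$ between these two separate inequalities can produce $\hat N$. Take $n=2$, so that $\hat N=0$, and $\kappa=0$. The values
\[
\Gamma(f)=\Gamma(w)=1,\quad \Gamma(f,w)=\Delta f=0,\quad \Gamma_2(f)=4,\quad \Gamma(\Gamma(f),w)=4
\]
satisfy both $\Gamma_2(f)\ge\tfrac1n(\Delta f)^2$ and $|\Gamma(\Gamma(f),w)|\le 2\,\Gamma(f)^{1/2}\Gamma_2(f)^{1/2}\Gamma(w)^{1/2}$. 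Yet the quantity that must dominate $\tfrac1N(\Delta f)^2-\hat N\Gamma(w)\Gamma(f)=0$, namely $\Gamma_2(f)-2\Gamma(\Gamma(f),w)+2\Gamma(w)\Gamma(f)+2\Gamma(f,w)\Delta f$, equals $4-8+2=-2$.

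\emph{Step 3 does not close either.} The cross term in item 4 is $2\,\Gamma(f,w)\Delta f$, not $4ab$. With the correct coefficient, Young plus $\Gamma(f,w)^2\le\Gamma(f)\Gamma(w)$ gives the coefficient $\tfrac{nN}{N-n}-2=\hat N+\tfrac{4(n-1)}{N-n}$, and that is before the Hessian term has been paid for. Nothing in your algebra produces the factor $n-2$.

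\emph{The integration-by-parts fallback is unavailable.} Moving the derivative off $\Gamma(f)$ requires $\Delta w$, which is exactly what the hypothesis $\Gamma(w)\in L^1_{\loc}$ avoids.

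The missing ingredient is the \emph{dimensional} self-improvement of $\BE$ for the mixed Hessian $\mathrm{H}_f(f,w)=\tfrac12\Gamma(\Gamma(f),w)$. This is the Sturm-type Hessian lemma underlying the Han--Sturm argument the paper invokes. Set
\[
\mu:=\mathbf{\Gamma}_2(f)+\Gamma(f)\kappa-\tfrac1n(\Delta f)^2\m\ge0,\qquad H_0:=\tfrac12\Gamma(\Gamma(f),w)-\tfrac1n\Delta f\,\Gamma(f,w),
\]
and let $\rho$ be the density of the absolutely continuous part of $\mu$. The lemma states
\[
H_0^2\le \rho\,\Big(\tfrac12\Gamma(f)\Gamma(w)+\tfrac{n-2}{2n}\Gamma(f,w)^2\Big)\qquad\m\text{-a.e.}
\]
This is the traceless Hessian tested on $\nabla f,\nabla w$, with the sharp weight. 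It is proved for nice $w$ and extends by density, since $\Gamma(\Gamma(f),w)$ is first order in $w$.

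With it the algebra closes. Write $a=\Delta f$ and $b=\Gamma(f,w)$.
\begin{enumerate}
\item The trace part $\tfrac1n ab$ of the Hessian combines with $+2ab$ into $\tfrac{2(n-2)}{n}ab$.
\item Absorbing $-4H_0$ into $\mu$ costs $-2\Gamma(f)\Gamma(w)-\tfrac{2(n-2)}{n}b^2$. The first part cancels the $+2\Gamma(w)\Gamma(f)$ from item 4.
\item For $N>n$, $\tfrac1n a^2+\tfrac{2(n-2)}{n}ab\ge\tfrac1N a^2-\tfrac{(n-2)^2N}{n(N-n)}b^2$, and $\tfrac{(n-2)^2N}{n(N-n)}+\tfrac{2(n-2)}{n}=\hat N$.
\end{enumerate}
Only at the end does $\hat N\ge0$ allow you to use $b^2\le\Gamma(f)\Gamma(w)$. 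Integrate against $e^{-2w}\phi$ (your Step 1 regularization is adequate for that), keep $\int\Gamma(w,\Gamma^w(f)\phi)\d\m$ exactly as it comes out of item 4, and the statement follows.
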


\subsection{Main result}
We provide now the main result of this section, which extends \cite[Theorem 1.1]{CMT_KatoBE} to tamed spaces. For any $n \in [1,+\infty)$, we set $$c(n)\df 1\wedge \frac{1}{2|n-2|}\, \cdot $$

\begin{theorem}\label{thm:BE}Let $(M,\dist,\m)$ be an infinitesimally Hilbertian length space satisfying $\m(M)<+\infty$. For $T>0$ and $n \in [1,+\infty)$, assume there exists $\kappa \in \K^-_{T,c(n)}(M,\dist,\m)$ such that $(M,\dist,\m)$ satisfies $\mathrm{BE}(-\kappa,n)$. Then there exists an $\m$-a.e.~continuous function $w\in \Sob$ satisfying $0\leq w\leq 4\k_T(\M,\dist,\m,\kappa)$ $\m$-a.e.~such that $(\M,\dist_w,\m_w)$ satisfies $\mathrm{BE}(-K,N)$ with
\[
K \df \frac{8\k_T(\M,\dist,\m,\kappa)}{T} \qquad \text{and} \qquad N \df n+3(1\vee (n-2)^2)\k_T(\M,\dist,\m,\kappa).
\]
\end{theorem}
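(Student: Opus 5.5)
We follow the strategy of \cite[Theorem 1.1]{CMT_KatoBE}: we build the weight $w$ from a resolvent-type potential of $\kappa$, and then feed it into Proposition~\ref{thm:timechange}. Write $\k_T \df \k_T(\M,\dist,\m,\kappa)$. The natural candidate is
\[
w \df \lambda \int_0^{T} \P_t \kappa \d t, \qquad\text{i.e.}\qquad w(x)=\lambda\,\mathbb{E}_x[A_T],
\]
with $\lambda>0$ to be fixed. Equivalently, we could use a resolvent $w=\lambda (\alpha-\Delta)^{-1}\kappa$ with $\alpha\sim 1/T$; the latter version has a cleaner equation.

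The first step collects the properties of $w$. Directly from Definition~\ref{def:Kato} we get $0\le w\le \lambda\k_T$ quasi-everywhere. Since $\m(M)<\infty$ and $w$ is bounded, $w\in L^2$. The identity $\Delta \int_0^T\P_t\kappa\d t=\P_T\kappa-\kappa$ holds in the distributional sense, where $\P_T\kappa$ is understood as a density. Testing it against $w$ gives $\Ch(w)\le \lambda\int w\d\kappa<\infty$, so $w\in\Sob$. The bound $\P_T\kappa\le \k_T/T$ can be obtained from the semigroup property: $\P_s$ of the functional is subadditive, and averaging over $[0,T]$ gives a pointwise bound of order $\k_T/T$. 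This produces the constant $K=8\k_T/T$ after the choice $\lambda\sim 4$, which matches the stated bound $w\le 4\k_T$. Quasi-continuity, and hence $\m$-a.e.\ continuity, follows from $w$ being a potential of a smooth measure in the Kato class, since the PCAF has continuous expectations.

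The second step is to apply Proposition~\ref{thm:timechange}. We integrate by parts in the term $\int\Gamma(w,\Gamma^w(f)\phi)\d\m=-\int \Gamma^w(f)\phi\,\Delta w\d\m$, where $\Delta w=\lambda(\P_T\kappa-\kappa)$ is a signed measure. This gives
\[
\Gamma_2^w[f,\phi]\ge \int \Gamma^w(f)\phi\big[(\lambda-1)\,\d\kappa-\lambda\P_T\kappa\,\d\m-\hat N\Gamma(w)\d\m\big]+\frac1N\int(\Delta^w f)^2\phi\d\m_w .
\]
Choosing $\lambda\ge1$ absorbs $\kappa$. The term $\lambda\P_T\kappa$ is bounded by $\lambda\k_T/T$, and after multiplying by $e^{-2w}$ to convert to $\m_w$ this stays bounded by $K$. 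The remaining obstacle is $\hat N\Gamma(w)$, which is not pointwise bounded.

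This gradient term is the main difficulty. Following \cite{CMT_KatoBE}, we replace $w$ by a concave transform $w=\Phi(v)$ of the potential $v=\int_0^T\P_t\kappa\d t$, for example $\Phi(v)=-\beta\log(1-\gamma v)$, or equivalently $e^{-w/\beta}$ is affine in $v$. The chain rule then gives $\Delta w=\Phi'(v)\Delta v+\Phi''(v)\Gamma(v)$. The negative term $\Phi''\Gamma(v)$ appears with a good sign after integration by parts and cancels $\hat N\Gamma(w)=\hat N\Phi'^2\Gamma(v)$ exactly when $-\Phi''\ge \hat N\Phi'^2$. This is solvable with $\Phi'\approx1$ on $[0,\k_T]$ precisely when $\hat N\k_T$ is small. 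Here the requirement $\k_T<c(n)=1\wedge\frac{1}{2|n-2|}$ enters. We then choose $N$ so that $\hat N=\frac{(n-2)(N-2)}{N-n}\approx \frac{|n-2|}{3\k_T}$-ish, which yields $N=n+3(1\vee(n-2)^2)\k_T$.

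Finally, we check that Proposition~\ref{thm:timechange} applies, i.e.\ that $w\in L^\infty$ and $\Gamma(w)\in L^1_\loc$. We also need to pass from $\Test_\infty$, $\Test_c$ to the full class of functions in Definition~\ref{Def:BE} for $(\M,\dist_w,\m_w)$. This follows by density and the lower-bounded constant curvature, which make $\Delta^{2K}$-test functions approximable. The bookkeeping of the constants $4,8,3$ is routine once $\Phi$ is fixed.
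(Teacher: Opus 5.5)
\textbf{The gap: the bound $\|\P_T\kappa\|_\infty\lesssim \k_T/T$ is not available.} Your error term $-\lambda\,\P_T\kappa$ (and, after the transform, $-\Phi'(v)\,\P_T\kappa$) rests entirely on this bound. Subadditivity of $t\mapsto\sup_x\mathbb{E}_x[A_t]$ and the Markov property only control increments:
$\mathbb{E}_x[A_{s+T}-A_s]=\int_s^{s+T}\P_t\kappa(x)\d t\le \k_T$ for every $s\ge 0$.
These are time averages of $t\mapsto \P_t\kappa(x)$, not its value at the single time $t=T$. Nothing in the semigroup structure prevents this function from peaking at $t=T$, where it can be much larger than its average.

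The standard way to pass from the average to the endpoint is a heat-kernel comparison $p_T(x,y)\le C\,p_t(x,y)$ for $t\in[T,2T]$. This would give $\P_T\kappa\le \frac{C}{T}\int_T^{2T}\P_t\kappa\d t\le C\k_T/T$. But such a comparison is a parabolic Harnack or Li--Yau estimate on $(\M,\dist,\m)$ itself. That is exactly what is unavailable before the time change (cf.\ the paper's closing remark) and what the time change is designed to produce. So the key step is circular as written, and the constants $K=8\k_T/T$, $w\le 4\k_T$ are not actually derived.

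\textbf{The repair is the resolvent variant you mention only in passing.} It is not ``equivalent''; it is what makes the argument close. For $u\df\int_0^\infty \e^{-\alpha t}\P_t\kappa\d t$ one has $-\Delta u=\kappa-\alpha u$. Splitting $[0,\infty)$ into intervals of length $T$ and using sub-Markovianity gives $\|u\|_\infty\le \k_T/(1-\e^{-\alpha T})$. The leftover term is then $-\alpha\,\Phi'(u)\,u\ge -\alpha\,\Phi'(0)\,\|u\|_\infty$, with no Harnack input.

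\textbf{The transform needs a sign fix.} The condition $-\Phi''\ge \hat N\Phi'^2$ is equivalent (for $\hat N>0$) to concavity of $\e^{\hat N\Phi}$. The extremal choice is $\Phi(u)=\hat N^{-1}\ln(1+c\hat N u)$ with $c=(1-\hat N\|u\|_\infty)^{-1}$, which ensures $\Phi'\ge 1$ on the range of $u$; this is where $\hat N\k_T<1$, i.e.\ $\k_T<c(n)$, enters. Your example $-\beta\ln(1-\gamma v)$ is convex for $\beta,\gamma>0$.

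\textbf{Relation to the paper.} With these corrections your scheme becomes a linearized version of the paper's proof. The paper takes $w=\lambda^{-1}\ln v$, where $v=2\beta\int_0^\infty \e^{-2\beta t}\P^{\lambda\kappa}_t\One_\M\d t$ is the Feynman--Kac gauge solving $-\Delta^{\lambda\kappa}v+2\beta v=2\beta$. After the Leibniz rule, the $\Gamma(v,\cdot)$ and $\kappa$ terms combine exactly into $\Ch^{\lambda\kappa}(v,\cdot)$. The leftover is $\frac{2\beta}{\lambda}\int \Gamma^w(f)\phi\,(v^{-1}-1)\d\m$. It is bounded below using $1\le v\le 2\e^{\beta T}$, which comes from Feynman--Kac and Miyadera--Voigt rather than from any pointwise control of $\P_T\kappa$.
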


In order to prove this result, we need two preliminary lemmas.

\begin{lemma}\label{lem:gaugefunction} 
    Let $(M,\dist,\m)$ be an infinitesimally Hilbertian length space satisfying $\m(M)<+\infty$. For $T>0$ and $n \in [1,+\infty)$, assume there exists $\kappa \in \K^-_{T,c(n)}(M,\dist,\m)$ such that $(M,\dist,\m)$ satisfies $\mathrm{BE}(-\kappa,n)$. Suppose there exists $\beta>0$ such that
\begin{align*}
    \k_T(M,\dist,\m,\kappa) \leq 1-\e^{-\beta T} < c(n).
\end{align*}
Then there exists an $\m$-a.e.~continuous function $v \in \D(\Delta^{\kappa})$ satisfying $\smash{1\leq v\leq 2\,\e^{\beta T}}$ $\m$-a.e.~and
\begin{align*}
    -\Delta^{\kappa}v +2\,\beta\,v = 2\,\beta \qquad \text{$\m$-a.e.}
\end{align*}
\end{lemma}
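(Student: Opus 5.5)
We will construct $v$ through the Feynman--Kac resolvent of the Schrödinger semigroup associated with $\Ch^\kappa$. Let $(\P^\kappa_t)_{t\ge0}$ be the semigroup generated by $\Delta^\kappa$; probabilistically,
\[
\P^\kappa_t f(x)=\mathbb{E}_x\big[\e^{A_t}f(X_t)\big],
\]
with $A_t$ the positive continuous additive functional of $\kappa$. We then define
\[
v\df 2\beta\int_0^\infty \e^{-2\beta t}\,\P^\kappa_t \One\d t = 2\beta\,(2\beta-\Delta^\kappa)^{-1}\One .
\]
Since $\m(M)<\infty$, we have $\One\in\Ltwo$. Provided $2\beta$ exceeds the bottom of the spectrum bound of $-\Ch^\kappa$ (to be checked below), $v$ lies in $\D(\Delta^\kappa)$ and solves $-\Delta^\kappa v+2\beta v=2\beta$ by the resolvent identity. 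The lower bound $v\ge 1$ follows from $A_t\ge 0$, which gives $\P^\kappa_t\One\ge \P_t\One=1$; here we use conservativeness of the heat semigroup, which we take from the length and upper-regularity structure or else treat as a standing fact.

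The main step is the upper bound, which needs an exponential growth estimate for $\P^\kappa_t\One$ in terms of $\k_T$. We would use Khasminskii's lemma: if $\sup_x\mathbb{E}_x[A_T]\le \k_T<1$, then
\[
\sup_x\mathbb{E}_x\big[\e^{A_T}\big]\le \frac{1}{1-\k_T}.
\]
By the Markov property, for $t\in[kT,(k+1)T)$ this gives
\[
\|\P^\kappa_t\One\|_\infty\le (1-\k_T)^{-(k+1)}\le (1-\k_T)^{-1}\e^{t\log(1/(1-\k_T))/T}.
\]
The hypothesis $\k_T\le 1-\e^{-\beta T}$ gives $\log(1/(1-\k_T))/T\le\beta$ and $(1-\k_T)^{-1}\le \e^{\beta T}$, so
\[
\|\P^\kappa_t\One\|_\infty\le \e^{\beta T}\e^{\beta t}.
\]
Hence
\[
v\le 2\beta\,\e^{\beta T}\int_0^\infty \e^{-\beta t}\d t=2\e^{\beta T}.
\]
The same growth bound gives $\|\P^\kappa_t\|_{2\to2}\le \e^{\beta T}\e^{\beta t}$ via interpolation and symmetry. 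This shows that the Laplace transform defining $v$ converges in $\Ltwo$ at $2\beta>\beta$, which justifies $v\in\D(\Delta^\kappa)$. We would quote \cite{ERST} for Khasminskii and for the identification of the analytic semigroup of $\Ch^\kappa$ with the Feynman--Kac semigroup, and keep the q-sup versus $\m$-a.e.\ issues at that level of generality.

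Finally, $\m$-a.e.\ continuity of $v$ comes from regularity of the Feynman--Kac semigroup. We would try to show that $\P^\kappa_t\One$ is quasi-continuous, in fact continuous under the heat kernel assumption, via a Kato-class strong Feller property, and that the integral defining $v$ converges uniformly by the exponential bound, so that continuity passes to $v$. This continuity step is the most delicate one, since the setting lacks explicit heat kernel bounds. If full continuity fails, the fallback is quasi-continuity of $v\in\D(\Delta^\kappa)\subset\Sob$, upgraded to $\m$-a.e.\ continuity using the finely continuous representative.
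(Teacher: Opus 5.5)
Your construction and estimates are essentially the paper's. You use the same $v=2\beta(2\beta-\Delta^\kappa)^{-1}\One$ and the same lower bound via Feynman--Kac and conservativeness. Conservativeness needs no extra structure: $\m(M)<\infty$ gives $\One\in\Sob$ with $\Ch(\One)=0$, hence $\Delta\One=0$ and $\P_t\One=\One$. You also get the same bound $\|\P^\kappa_t\|_{\infty,\infty}\le \e^{\beta(T+t)}$. You derive it from Khasminskii's lemma iterated with the Markov property, which is the probabilistic counterpart of the Miyadera--Voigt estimate the paper cites. Your interpolation to $\|\P^\kappa_t\|_{2\to2}\le \e^{\beta(T+t)}$ is a welcome addition: it justifies that the Laplace transform at $2\beta$ really is the $L^2$-resolvent, which the paper only asserts.

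The step that does not go through is $\m$-a.e.\ continuity. Your fallback is exactly the paper's argument, but neither quasi-continuity nor fine continuity of $v\in\Sob$ implies continuity at $\m$-a.e.\ point. Quasi-continuity only says that the \emph{restriction} of $v$ to the complement of an open set of small capacity is continuous, which is a Lusin-type statement. In $\mathbb{R}^3$, for instance, one can choose disjoint balls $B(q_k,r_k)$ with $\sum_k r_k^{1/2}<\infty$ and small total volume, such that every neighbourhood of every point outside their closures contains one of them. Putting a cut-off spike $\sin(|x-q_k|^{-1/4})$ in each ball (Dirichlet energy $\lesssim r_k^{1/2}$) gives a bounded $W^{1,2}$ function with no representative continuous at any point outside the closed balls, a set of positive measure.

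Your primary route, continuity of $\P^\kappa_t\One$ through a strong Feller property of the Kato-class Feynman--Kac semigroup, is the right kind of argument. However, it requires a Feller property of $(\P_t)$ (e.g.\ a continuous heat kernel with local bounds) that the lemma does not assume; it does not even assume a heat kernel. So this part of the statement is not established by your proposal, nor, for the same reason, by the paper's proof. It matters, since the later time-change results are stated for $\m$-a.e.\ continuous $w=\lambda^{-1}\ln v$.
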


\begin{proof}
Set
\[
v := 2\beta \int_0^\infty \e^{-2\beta t}\,\P_t^{\kappa}\One_\M\d t.
\]
Since $\m(\M)<+\infty$, the heat semigroup $(\P_t)_{t\geq 0}$ is conservative, i.e., $\P_t\One_\M=\One_\M$, $t\geq0$.
By the Feynman-Kac formula, we have $$\P_t^{\kappa}\One_\M\geq \P_t\One_\M=\One_\M$$ for any $t>0$, so that $v\geq1$ $\m$-a.e. In order to estimate the supremum norm, note that the Myadera-Voigt perturbation theorem \cite{Voigt-77} implies 
\[
\|\P_t^{\kappa}\|_{\infty,\infty}\leq \e^{\beta (T+t)}, \qquad t>0,
\]
where $\|A\|_{\infty,\infty}$ denotes the norm of the bounded operator $A$ in $L^\infty(\M,\m)$.
Hence, we obtain
\begin{align*}
\|v\|_\infty
&\leq 2\beta\int_0^\infty e^{-2\beta t}\| \P_t^{\kappa}\One_\M\|_\infty \d t
\\
&\leq 2\beta\int_0^\infty \e^{-2\beta t}\| \P_t^{\kappa}\|_{\infty,\infty} \d t 
\\
&\leq 
2\beta \int_0^\infty \e^{\beta(T- t)} \d t
\\
&=2\e^{\beta T}.
\end{align*}
Since $\m(X)<\infty$, we have $\One_\M\in L^2(X,\m)$ and therefore $v\in\D(\Delta^{\kappa})\subset \Sob$, where the last inclusion follows since $\kappa$ is in the extended Kato class. Hence, $v$ solves the desired equation $\m$-almost everywhere. By \cite[Theorem~2.1.3]{FukushimaYoshidaTakeda}, $v \in \Sob$ possesses a quasi-continuous modification, which therefore is $\m$-almost everywhere continuous.
\end{proof}
\begin{remark}
 If $M$ is a smooth complete Riemannian manifold and $\kappa$ is absolutely continuous with respect to the Riemannian volume measure with continuous density, then Lemma~\ref{lem:gaugefunction} follows from elliptic regularity theory.
\end{remark}

We are now in a position to prove Theorem \ref{thm:BE}.

\begin{proof} Set $\k:=\k_T(M,\dist,\m,\kappa)$. If $\k=0$, then $(\M,\dist,\m)$ satisfies the Bakry--\'Emery curvature-dimension condition $\mathrm{BE}(0,n)$, and the statement is trivial. Thus, we assume that $\k>0$.
Set \[
\hat N \df \frac{(n-2)(N-2)}{N-n},\quad
\lambda \df \begin{cases}
    \frac{1}{2\k}& \colon n= 2,\\
    \hat N&\colon n\neq 2,
\end{cases}
\quad\text{and}\quad \beta \df  \frac{1}T\ln\left(\frac1{1-\lambda\k} \right)>0.
\]
As the definition of $\beta$ implies $
\k= (1-\e^{-\beta T})/\lambda$, we have 
\[
\k_T(M,\dist,\m,\lambda\kappa)=\lambda\k= 1-\e^{-\beta T}.
\]
By Lemma~\ref{lem:gaugefunction}, there exists a quasi-continuous $v\in \D(\Delta^{\lambda \kappa})\subset \Sob$ such that 
$\smash{1\leq v\leq 2\,\e^{\beta T}}$ and $- \Delta^{\lambda\kappa}v +2\,\beta\,v = 2\,\beta$ $\m$-a.e. In particular, we have $\Gamma(v)\in L^1(M,\m)$. We set $$w:=\frac{1}{\lambda}\ln v.$$ 
Then $0\leq w\leq \frac{\ln(2)+\beta T}{\lambda}$ $\m$-a.e. Therefore, Proposition~\ref{thm:timechange} yields that, for every $f\in\Test_\infty(\M)$ and every $\m$-a.e.~non-negative $\phi\in\Test_c(\M)$,
\begin{align*}
    \Gamma_2^w[f,\phi]  \geq & - \hat{N}\int_\M \,\Gamma(w)\,\Gamma^w(f)\,\phi\d\m + \int_\M\Gamma(w,\Gamma^w(f)\,\phi)\d\m \\
     & - \int_\M\Gamma^w(f)\,\phi\d \kappa + \frac{1}{N}\int_\M(\Delta^wf)^2\,\phi\d\m_w.
\end{align*}
In order to obtain the desired Bakry-\'Emery curvature-dimension condition, we are left with bounding from below the sum of the first three integrals in the previous right-hand side.

Expanding the first integral using the definition of $w=\frac{1}{\lambda}\ln v$ yields 
\[
    - \hat{N}\int_\M \,\Gamma(w)\,\Gamma^w(f)\,\phi\d\m
    =- \frac{\hat{N}}{\lambda}\int_\M \,\frac{\Gamma(v)}{v}\,\frac{\Gamma^w(f)\,\phi}{\lambda v}\d\m \text.
\]

For the second integral, we first claim $\Gamma^w(f)\phi = e^{-2w}\Gamma(f)\phi\in W^{1,2}(M,\dist,\m)\cap L^\infty(M,\m)$. Boundedness is straightforward since each factor belongs to $L^\infty(\M,\m)$, where this property for $\smash{e^{-2w}}$ follows from \Cref{lem:gaugefunction}. Since bounded Sobolev functions form an algebra, it remains to show that each factor belongs to $W^{1,2}(\M,\dist,\m)$. For $\phi$ and $\Gamma(f)$, this follows directly from the definition of $\Test(\M,\dist,\m)$. The inclusion $e^{-2w}\in W^{1,2}(\M,\dist,\m)$ follows from the chain rule, finiteness of $\m$, and the property $w\in W^{1,2}(\M,\dist,\m)\cap L^\infty(\M,\m)$ implied by \Cref{lem:gaugefunction}. Therefore, the Leibniz rule gives
\begin{multline*}
    \int_\M\Gamma(w,\Gamma^w(f)\,\phi)\d\m
    =\int_\M\frac{1}{\lambda v}\Gamma(v,\Gamma^w(f)\,\phi)\d\m 
    \\=
    \int_\M\Gamma\left( v,\frac{\Gamma^w(f)\,\phi}{\lambda v}\right)\d\m 
     -\int_\M \Gamma\left( v,\frac{1}{\lambda v}\right)\Gamma^w(f)\phi\d\m
    \\
    =\int_\M\Gamma\left( v,\frac{\Gamma^w(f)\,\phi}{\lambda v}\right)\d\m 
     +\int_\M \frac{\Gamma(v)}{v}\frac{\Gamma^w(f)\phi}{\lambda v}\d\m.
\end{multline*}
Using $\lambda\geq \hat N$ and $\Gamma(v)/v\geq 0$ $\m$-a.e.~to drop the first integral in the second line, we obtain
\begin{multline*}
    - \hat{N}\int_\M \,\Gamma(w)\,\Gamma^w(f)\,\phi\d\m + \int_\M\Gamma(w,\Gamma^w(f)\,\phi)\d\m
     - \int_\M\Gamma^w(f)\,\phi\d \kappa
     \\
     =
    \left(1 - \frac{\hat{N}}{\lambda}\right)\int_\M \,\frac{\Gamma(v)}{v}\,\frac{\Gamma^w(f)\,\phi}{\lambda v}\d\m 
     + \int_\M\Gamma\left( v,\frac{\Gamma^w(f)\,\phi}{\lambda v}\right) \d\m 
    - \int_\M \frac{\Gamma^w(f)\,\phi}{\lambda v} \lambda v\d \kappa
     \\
     \geq 
   \int_\M\Gamma\left( v,\frac{\Gamma^w(f)\,\phi}{\lambda v}\right) \d\m 
    - \int_\M \frac{\Gamma^w(f)\,\phi}{\lambda v} \lambda v\d \kappa
    =\int \frac{\Gamma^w(f)\phi}{\lambda v}\left[-\Delta^{-\lambda\kappa}v\right]\d \m.
\end{multline*}
Since $ -\Delta^{\lambda\kappa}v=2\beta-2\beta v=2\beta(1-v)$ and $1\leq v\leq 2e^{\beta T}$ $\m$-a.e., we obtain, using $\frac{\Gamma^w(f)\phi}{\lambda v}\in \Test(\M)$ 
and $\frac{\Gamma^w(f)\phi}{\lambda v}\geq 0$ $\m$-a.e.,
\[
\int \frac{\Gamma^w(f)\phi}{\lambda v}\left[-\Delta^{-\lambda\kappa}v\right]\d \m
=
\frac{2\beta}\lambda\int \Gamma^w(f)\phi\left[\frac{1}v-1\right]\d \m
\geq -\frac{\beta}\lambda (2-e^{-\beta T})
\int \Gamma^w(f)\phi\d \m.
\]
We are left with estimating the constant on the right-hand side. 
Since  $\lambda \k=1-e^{-\beta T}$ and $\beta= \frac{1}T\ln\left(\frac1{1-\lambda\k} \right)$ we have
\[
\frac{\beta}\lambda (2-e^{-\beta T})
= \frac{1}T\ln\left(\frac1{1-\lambda\k} \right)\left(\k+\frac{1}{\lambda}\right).
\]
In order to estimate the right-hand side, we  distinguish three cases. Recall $\hat N=\tfrac{(n-2)(N-2)}{N-n}$.
\\
\textbf{Case 1:} $n>2$. We have $N=n+3(n-2)^2\k$, $\lambda=\hat N$, and $\k< 1\wedge \tfrac{1}{2(n-2)}$, so that 
\[
\lambda \k=\hat N\k
=
\frac{1}{3}+(n-2)\k\leq \frac{5}{6},\qquad \frac{1}{\lambda}=\frac{3(n-2)^2\k}{(n-2+3(n-2)^2\k)(n-2)}\le 3\k.
\]
\textbf{Case 2:} $n<2$. We have $N=n+2(n-2)^2\k$, $\lambda=\hat N$, and $\k<1\wedge \tfrac{1}{2(2-n)}$, so that, since $n-2<0$,
\[
\lambda \k=\hat N\k
=
\frac{1}{2}+(n-2)\k\leq \frac{1}{2},\qquad \frac{1}{\lambda}=\frac{2(2-n)^2\k}{(2(n-2)^2\k+(2-n))(2-n)}\le 2\k.
\]
\textbf{Case 3:} $n=2$. We have $N=n+2\k$, $\lambda=1/2\k$, and $\k< 1$, such that
\[
\lambda \k=\frac{1}{2},
\qquad \frac{1}{\lambda}= 2\k.
\]
Hence, we always have $\lambda\k\leq 5/6$ and $1/\lambda\leq 3\k$. Therefore,
\[
 \frac{1}T\ln\left(\frac1{1-\lambda\k} \right)\left(\k+\frac{1}{\lambda}\right)
\leq 
\frac{1}T\ln\left(\frac1{1-5/6} \right)\left(\k+3\k\right)
=\frac{8\k}{T},
\]
and the claim follows.
\end{proof}

\section{Doubling and Poincaré via time-change}

In this section, we prove Theorem \ref{thm:MainA}. To this purpose, we  consider an infinitesimally Hilbertian length  metric measure space $(\M,\dist,\m)$ with finite total measure. We assume that $(\M,\dist,\m)$ admits a heat kernel $p$, that it satisfies the bounded interpolation property, and that there exist $T>0$, $n \ge 1$, and $\kappa \in \K^-_{T,c(n)}(\M,\dist,\m)$ such that $(M,\dist,\m)$ satisfies BE$(-\kappa,n)$.

By Theorem \ref{thm:BE}, there exists $w \in \Sob$ $\m$-a.e.~continuous satisfying $0 \le w \le 4 \k_T(M,\dist,\m,\kappa)$ such that the time-changed space $(M,\dist_w,\m_w)$ satisfies $\mathrm{BE}(-K,N)$ with \[K=\frac{8\k_T(M,\dist,\m,\kappa)}{T}\quad \text{and} \quad  N=n+3(1\vee (n-2)^2) \k_T(M,\dist,\m,\kappa).\] We also know from Lemma \ref{lem:HKtimechange} that $(M,\dist_w,\m_w)$ admits a heat kernel $p^w$.

Let us provide a series of technical results adapted from the paper of Garofalo--Mondino \cite{GarofaloMondino-13}. The context of this paper is a metric measure space satisfying the Riemannian curvature-dimension condition $\RCD^*(\rho,\eta)$ for some $\rho \in \mathbb{R}$ and $\eta \in [1,+\infty)$. From the work of Erbar--Kuwada--Sturm \cite[Theorem 7]{EKS}, it is well-known that the $\RCD^*(\rho,\eta)$ and $\BE(\rho,\eta)$ conditions are equivalent on any infinitesimally Hilbertian length metric measure space with finite measure satisfying the Sobolev-to-Lipschitz property. The latter means that every Sobolev function $f$ admits a Lipschitz representative with Lipschitz constant smaller than the $L^\infty$ norm of the minimal weak upper gradient $|df|$.

In our context, we do not know whether the time-changed space $(M,\dist_w,\m_w)$ satisfies the Sobolev-to-Lipschitz property. However, the proofs in \cite{GarofaloMondino-13} are based on the Bakry--\'Emery curvature-dimension condition $\BE(\rho,\eta)$, hence they remain valid for $(M,\dist_w,\m_w)$.

We begin with the following variant of \cite[Proposition 3.2]{GarofaloMondino-13}. This result was first proved for Riemannian manifolds by Baudoin--Garofalo \cite[Theorem 2.2]{BaudoinGarofalo}, based on a preliminary sub-Riemannian version obtained in \cite[Proposition 6.5]{BaudoinGarofalo2}, see also \cite[Theorem 5.2]{BaudoinGarofalo3}.

\begin{proposition}\label{prop:gauge}
Consider $f\in L^\infty(\M, \m_w)$ with $f \ge \delta$ $\m_w$-a.e.~for some $\delta>0$. For any $0 < s < t$, set 
\[\Phi^w(s)=\P_s^w\bigg((\P_{t-s}^wf)\, |d(\ln \P^w_{t-s}f)|^2_{w}\bigg).\]
    For any $\cC^1$ function $a\colon(0,t)\to \RR_+$ and any $\varphi\in \Linfw$ such that $\varphi \ge 0$ $\m_w$-a.e., the function
    \[
    s\mapsto \int [ a \Phi^w ](s) \, \varphi\d \m_w 
    \]
    is absolutely continuous on $(0,t)$. Moreover, for any $\cC^0$ function $\gamma\colon(0,t)\to \RR$, for a.e.~$s\in(0,t)$,
    \begin{align*}
     \frac{\d}{\d s} \left( \int [ a \Phi^w ](s)\, \varphi\d \m_w  \right) \nonumber 
    & \geq \int \left(\left[a'-\frac{4a \gamma}{N}-2Ka\right] \mkern-5mu(s) \, \Phi^w(s)\right.  \nonumber \\
    &  \qquad +  \left. \left[\frac{4a\gamma}{N}\right]\mkern-5mu(s)\, \Delta^w P_{t}^wf-\left[\frac{2a\gamma^2}{N} \right]\mkern-5mu(s)\, P_{t}^wf\right)\varphi\d \m_w . 
    \end{align*}

\end{proposition}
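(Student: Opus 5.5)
We follow the Bakry--Ledoux/Baudoin--Garofalo semigroup interpolation, as in \cite[Proposition 3.2]{GarofaloMondino-13}, using only the $\BE(-K,N)$ condition of $(\M,\dist_w,\m_w)$ from Theorem \ref{thm:BE}. Fix $t>0$ and $f$ as in the statement. Set $u_s \df \P^w_{t-s}f$ for $s\in(0,t)$. By the maximum principle, $\delta\le u_s\le\|f\|_\infty$, so $\ln u_s$ is a smooth bounded function of a Sobolev function. Moreover $(u_s)\,|d\ln u_s|_w^2 = \Gamma^w(u_s)/u_s$. By symmetry of $\P^w_s$ with respect to $\m_w$,
\[
\int a(s)\Phi^w(s)\,\varphi\d\m_w=a(s)\int \frac{\Gamma^w(u_s)}{u_s}\,\P^w_s\varphi\d\m_w .
\]
It is therefore enough to differentiate $s\mapsto \int \frac{\Gamma^w(u_s)}{u_s}\,\psi_s\d\m_w$ with $\psi_s=\P^w_s\varphi\ge0$. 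We first treat $\varphi$ regular, namely $\varphi\in \Test_c$ or a mollified version $\P^w_\epsilon\varphi$. We then pass to general $\varphi\in\Linfw$ by density, which is legitimate because the inequality is linear in $\varphi$ and stable under weak-$*$ limits.

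\textbf{Absolute continuity and derivative.} The map $s\mapsto u_s$ is $C^1$ into $\Sob$ with $\partial_s u_s=-\Delta^w u_s$, and $\psi_s$ is $C^1$ in $L^2$ with $\partial_s\psi_s=\Delta^w\psi_s$. These facts give absolute continuity on compact subintervals of $(0,t)$, as in the variant of \cite[Proposition 5.7]{AGSInv} recalled above. Differentiating and integrating by parts, one obtains
\[
\frac{\d}{\d s}\int \frac{\Gamma^w(u)}{u}\psi\d\m_w = 2\,\Gamma^w_2\Big[\ln u,\,u\psi\Big]\quad\text{(in weak form)},
\]
which is the standard identity $\Delta(u\Gamma(\ln u))-2u\Gamma(\ln u,\Delta\ln u)+\dots = 2u\,\Gamma_2(\ln u)$. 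Justifying it requires testing $\Gamma_2^w$ with $\ln u_s$, which is not a priori in $\Test_\infty$. We handle this as in \cite{GarofaloMondino-13}: first take $u_s$ with $f$ replaced by $\P^w_\epsilon f$, which lies in $\Test_\infty$ by the $\BE(-K,\infty)$ $L^\infty$-gradient estimate, then apply the chain rule for $\Gamma_2$ under the smooth map $\ln$ on $[\delta,\infty)$, and finally let $\epsilon\to0$.

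\textbf{Applying curvature-dimension.} The $\BE(-K,N)$ inequality applied to $g=\ln u_s$ with weight $u_s\psi_s\ge0$ gives
\[
2\Gamma_2^w[\ln u,u\psi]\ge -2K\int \Gamma^w(\ln u)\,u\psi\d\m_w+\frac{2}{N}\int(\Delta^w\ln u)^2u\psi\d\m_w.
\]
Next we use $\Delta^w\ln u=\frac{\Delta^w u}{u}-\Gamma^w(\ln u)$ together with the elementary inequality $X^2\ge 2\gamma X-\gamma^2$ for $X=\Delta^w\ln u$. This yields
\[
\frac2N(\Delta^w\ln u)^2u\ge \frac{4\gamma}{N}\Delta^w u-\frac{4\gamma}{N}u\,\Gamma^w(\ln u)-\frac{2\gamma^2}{N}u.
\]
Multiplying by $a(s)$ and adding $a'(s)\int\Phi^w\varphi$ from the product rule, we undo the symmetry. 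Because $\P^w_s$ commutes with $\Delta^w$ and is Markovian, $\int \Delta^w u_s\,\psi_s\d\m_w=\int\Delta^w\P^w_tf\,\varphi\d\m_w$ and $\int u_s\psi_s\d\m_w=\int \P^w_tf\,\varphi\d\m_w$. This is exactly the claimed inequality. Conservativeness of $\P^w$ holds since $\m_w(\M)\le e^{8\k}\m(\M)<\infty$.

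\textbf{Main obstacle.} The delicate step is the rigorous derivative identity and the use of $\BE(-K,N)$ with the test function $\ln u_s$ and the weight $u_s\psi_s$. The latter must be nonnegative and sufficiently regular ($\Sob\cap L^\infty$ with bounded Laplacian, as required in Definition \ref{Def:BE} with $\kappa=K\m_w$). This is where the approximations $\P^w_\epsilon$ and a truncation or regularisation of $\varphi$ enter. We would first establish the inequality for these approximations and then pass to the limit using lower semicontinuity of the $\Gamma^w$ terms and strong $L^2$-continuity of the semigroup.
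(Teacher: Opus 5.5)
Your proposal is correct and is essentially the argument the paper relies on. The paper writes out no separate proof: it states that the Baudoin--Garofalo/Garofalo--Mondino proof of \cite[Proposition 3.2]{GarofaloMondino-13} uses only the Bakry--\'Emery condition, so it carries over to $(\M,\dist_w,\m_w)$ through $\BE(-K,N)$. Your sketch reproduces that argument with the stated constants: the identity $\frac{\d}{\d s}\int \Phi^w\varphi\d\m_w = 2\Gamma_2^w[\ln u_s,\,u_s\P^w_s\varphi]$, then $\BE(-K,N)$, then $X^2\ge 2\gamma X-\gamma^2$ applied to $X=\Delta^w\ln u_s$, and finally symmetry of $\P^w_s$.
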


The next result is a Li-Yau inequality for the time-changed space $(M,\dist_w,\m_w)$. This follows from the argument of  \cite[Theorem~1.3]{GarofaloMondino-13}, using that this space satisfies BE$(-K,N)$ with $K> 0$.
\begin{proposition}\label{prop:LiYau}
    For any $f \in \Lonew$ and $t >0$,
\begin{equation*}
| d (\P_t^wf)|_{w}^2\leq \e^{2Kt/3}(\Delta^w\P^w_tf)\P^w_tf-\frac{N K}{3}\frac{ \e^{4Kt/3}}{1-\e^{2Kt/3}}(\P_t^wf)^2 \quad \text{$\m_w$-a.e.}
\end{equation*}
\end{proposition}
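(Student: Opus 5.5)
We will follow the Baudoin--Garofalo strategy as implemented in \cite[Theorem~1.3]{GarofaloMondino-13}. This applies Proposition~\ref{prop:gauge} with a well-chosen pair $(a,\gamma)$ and integrates in $s$ over $(0,t)$. Since both sides of the claimed inequality are continuous in $L^1$-type topologies and $\P_t^w$ is a linear contraction, we first reduce to $f\in \Linfw$ with $f\ge \delta>0$. We will then pass to general nonnegative $f\in\Lonew$ by truncation and adding $\delta$, letting $\delta\downarrow0$. For signed $f$ the statement should be read for $f\ge0$, as is standard. The case of general $f$ would need care, and we will restrict to $f\ge 0$ as Garofalo--Mondino do.

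Fix $t>0$ and $\varphi\ge0$ in $\Linfw$. Proposition~\ref{prop:gauge} gives, after integrating over $s\in(\epsilon,t-\epsilon)$ and letting $\epsilon\to0$,
\[
\int [a\Phi^w](t)\varphi\d\m_w-\int [a\Phi^w](0)\varphi\d\m_w \ge \int\Big(\int_0^t\Big[a'-\tfrac{4a\gamma}{N}-2Ka\Big]\Phi^w\d s+\Big(\int_0^t\tfrac{4a\gamma}{N}\d s\Big)\Delta^w\P_t^wf-\Big(\int_0^t\tfrac{2a\gamma^2}{N}\d s\Big)\P_t^wf\Big)\varphi\d\m_w.
\]
Note that $\Phi^w(0)=\P_t^wf\,|d\ln\P_t^wf|_w^2=|d\P^w_tf|^2_w/\P^w_tf$ and $\Phi^w(t)=\P_t^w(f|d\ln f|^2_w)$. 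We choose $a$ with $a(t)=0$, $a(0)=1$, and $\gamma=\frac N4\big(\frac{a'}{a}-2K\big)$, so that the $\Phi^w$ bracket vanishes. The boundary terms at $s=0,t$ then need justification via the absolute continuity from Proposition~\ref{prop:gauge}, together with lower semicontinuity / strong continuity of $\P^w_s$ as $s\to0$. Concretely, we take $a(s)=\big(\frac{e^{-2Ks/3}-e^{-2Kt/3}}{1-e^{-2Kt/3}}\big)^3$ (or the equivalent normalisation). This choice makes $\int_0^t a\gamma$ and $\int_0^t a\gamma^2$ explicitly computable.

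The resulting inequality reads
\[
-\frac{|d\P_t^wf|_w^2}{\P_t^wf}\ge c_1(t)\,\Delta^w\P_t^wf-c_2(t)\P_t^wf
\]
with $c_1=\frac4N\int_0^ta\gamma$ and $c_2=\frac2N\int_0^ta\gamma^2$. Here we expect $c_1<0$, so that dividing through and multiplying by $\P_t^wf$ gives the stated form. The main (computational) obstacle is to verify that this $a$ yields exactly $-\frac{1}{c_1}\cdot$ coefficient $e^{2Kt/3}$ in front of $(\Delta^w\P^w_tf)\P^w_tf$, and the coefficient $-\frac{NK}{3}\frac{e^{4Kt/3}}{1-e^{2Kt/3}}$. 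This is a one-variable calculus check mirroring \cite{GarofaloMondino-13} with $K$ replaced by $-K$. If the constants come out slightly different, we will tune the exponent of $a$ (a power $p$ of $e^{-2Ks/3}-e^{-2Kt/3}$, optimised in $p$). Finally, since $\varphi\ge0$ is arbitrary, the integrated inequality holds $\m_w$-a.e. The singular boundary behaviour of $a'$ near $s=t$ is controlled because $a'(t)=0$ for the cubic power, so all integrals are finite.
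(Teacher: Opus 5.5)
Your route is the one the paper takes: its proof only points to the argument of \cite[Theorem~1.3]{GarofaloMondino-13}, i.e.\ Proposition~\ref{prop:gauge} with $a(0)=1$, $a(t)=0$ and $\gamma=\frac N4\bigl(\frac{a'}{a}-2K\bigr)$. However, the one step with real content, the choice of $a$, fails, and your fallback cannot repair it. With this $\gamma$ one has $\frac{4a\gamma}{N}=a'-2Ka$, so the coefficient in front of $(\Delta^w\P^w_tf)\,\P^w_tf$ is $-c_1=1+2K\int_0^t a\,ds$; no division by $c_1$ is involved. Take $a(s)=\bigl(\frac{e^{-2Ks/3}-c}{1-c}\bigr)^p$ with $c=e^{-2Kt/3}$. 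The substitution $x=e^{-2Ks/3}$ gives
\[
-c_1=1+\frac{3}{(1-c)^p}\int_c^1\frac{(x-c)^p}{x}\,dx .
\]
For $p=3$ this contains the term $3c^3\ln c/(1-c)^3$, which is linear in $t$. For every $p>0$ it stays bounded and tends to $1+3/p$ as $t\to\infty$, whereas $e^{2Kt/3}\to\infty$. So no power of $e^{-2Ks/3}-e^{-2Kt/3}$ yields the stated constants. Concretely, at $c=1/2$ your cube gives $(-c_1,c_2)\approx(1.42,\,1.23\,NK)$ instead of $(2,\tfrac{4}{3}NK)$. The stated inequality cannot be recovered from this pair: rescaling to coefficient $2$ in front of the Laplacian term, using $\Delta u/u\ge -c_2/(-c_1)$, only gives $\approx1.73\,NK$.

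The missing idea is to build the weight $e^{2Ks}$ into $a$. Writing $a=e^{2Ks}h$ gives $a'-2Ka=e^{2Ks}h'$, hence $-c_1=-\int_0^t e^{2Ks}h'\,ds$ and $c_2=\frac N8\int_0^t e^{2Ks}(h')^2/h\,ds$. Now choose the square $h(s)=\bigl(\frac{e^{-2Ks/3}-c}{1-c}\bigr)^2$. The integrands become $\frac{4K}{3(1-c)^2}\bigl(e^{2Ks/3}-c\,e^{4Ks/3}\bigr)$ and $\frac{16K^2}{9(1-c)^2}e^{2Ks/3}$. They integrate to exactly $-c_1=1/c=e^{2Kt/3}$ and $c_2=\frac{NK}{3c(1-c)}=\frac{NK}{3}\frac{e^{4Kt/3}}{e^{2Kt/3}-1}$, which is the statement. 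As $K\to0$ this $a$ becomes $(t-s)^2/t^2$, the classical Li--Yau choice.

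Two smaller points. First, $(a')^2/a$ is bounded near $s=t$, so $c_2<\infty$. Second, you do not need $\Phi^w(t)=\P^w_t(f|d\ln f|_w^2)$, which is meaningless for a mere bounded $f$. For $f\ge\delta$ one has $\int\Phi^w(s)\varphi\d\m_w\le\delta^{-1}\|\varphi\|_\infty\|f\|^2_{L^2(\m_w)}/(2(t-s))$, while $a(s)=O((t-s)^2)$, so the boundary term at $s=t$ vanishes. Your remark that the statement must be read for $f\ge0$ is correct.
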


The next statement is a Harnack inequality for the heat flow  of the time-changed space $(\M, \d_w, \m_w)$. We obtain it by following the lines of \cite[Theorem 1.4]{GarofaloMondino-13}. We provide the details since this is where we make use of the BIP property of $(M,\dist,\m)$ to circumvent the fact that $(M,\dist_w,\m_w)$ is not necessarily $\RCD^*(-K,N)$.

\begin{lemma}\label{lem:LYtimechange}
Consider $f\in L^1(\M,\m_w)$. For any $s,t>0$ such that $s<t$, and $\m_w$-a.e.~$y,z\in \M$,
\[
(\P_t^wf)(z)\geq (\P_s^wf)(y)\, \e^{-\frac{\dist_w^2(z,y)}{4(t-s)\e^{-2Kt/3}}}\left(\frac{1-\e^{-2Ks/3}}{1-\e^{-2Kt/3}}\right)^{\frac{N}{2}}.
\]
In particular, the heat kernel $p^w$ of
$(\M,\dist_w,\m_w)$ satisfies, for $\m_w$-a.e.~$x,y,z\in\M$ and $0<s<t$,
\[
p_t^w(y,z)\geq p_s^w(x,z)\e^{-\frac{\dist_w(x,y)^2}{4(t-s)\e^{2Kt/3}}}\left(\frac{1-\e^{2Ks/3}}{1-\e^{2Kt/3}}\right)^{\frac{N}{2}}.
\]
\end{lemma}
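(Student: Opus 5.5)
The plan is to follow the classical Li--Yau route to Harnack, as in \cite[Theorem 1.4]{GarofaloMondino-13}: integrate the Li--Yau inequality of Proposition~\ref{prop:LiYau} along curves joining $y$ to $z$. Fix $f$, which we may assume bounded and $\ge\delta>0$; the general case follows by truncation, adding $\delta$, and monotone convergence. Set $u(x,\tau)\df \ln \P^w_\tau f(x)$. Proposition~\ref{prop:LiYau} rewrites as
\[
\partial_\tau u=\frac{\Delta^w\P^w_\tau f}{\P^w_\tau f}\;\ge\; e^{-2K\tau/3}|du|_w^2+\frac{NK}{3}\frac{e^{2K\tau/3}}{1-e^{2K\tau/3}}.
\]
For a curve $\gamma$ from $y$ to $z$ and the time reparametrization $\alpha_r=s+r(t-s)$, the chain rule lemma (the variant of \cite[Proposition 5.7]{AGSInv} stated above) gives
\[
\frac{\d}{\d r}u(\gamma_r,\alpha_r)\ge (t-s)\partial_\tau u-|du|_w|\dot\gamma_r|_w .
\]
Combining this with the Li--Yau bound and Young's inequality $ab\le \epsilon a^2+b^2/(4\epsilon)$, with $\epsilon=(t-s)e^{-2K\alpha_r/3}\ge (t-s)e^{-2Kt/3}$, absorbs the gradient term. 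Integrating over $r\in[0,1]$ then yields
\[
u(z,t)-u(y,s)\ge -\frac{e^{2Kt/3}}{4(t-s)}\int_0^1|\dot\gamma_r|_w^2\d r+\frac N2\ln\frac{1-e^{-2Ks/3}}{1-e^{-2Kt/3}},
\]
after computing the time integral of the $NK/3$ term exactly. Exponentiating and taking the infimum over curves, with $\int|\dot\gamma|_w^2\to \dist_w(y,z)^2$ since $(M,\dist_w)$ is length, gives the first inequality. I will track the signs of $K$ in the exponents carefully, matching the statement.

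The main obstacle is justifying the pointwise curve computation. The functions $u(\cdot,\tau)$ are only Sobolev and are defined $\m_w$-a.e., and without Sobolev-to-Lipschitz or $\RCD$ on $(M,\dist_w,\m_w)$ we cannot evaluate them along individual curves. The fix is to work with test plans instead of curves. Take $\mu_0=\m(B_\epsilon(y))^{-1}\m\llcorner B_\epsilon(y)$ and $\mu_1$ analogously at $z$, both with bounded densities. The BIP of $(M,\dist,\m)$ gives $\pi\in\mathrm{OptGeo}_2(\mu_0,\mu_1)$ with $(e_r)_\#\pi\le C\m$ uniformly in $r$. Since $0\le w\le 4\k$, we have $\m\le\m_w\le e^{8\k}\m$, and $\dist,\dist_w$ are bi-Lipschitz with constants $e^{\pm 4\k}$ by the dual representation of $\dist_w$. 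Hence $\pi$ is a test plan relative to $\m_w$, and its $\dist$-geodesics are $\dist_w$-Lipschitz.

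The chain rule lemma then applies for $\pi$-a.e.\ $\gamma$. Integrating the estimate against $\pi$ gives
\[
\int\ln\P^w_tf\d\mu_1-\int\ln\P^w_sf\d\mu_0\ge -\frac{e^{2Kt/3}}{4(t-s)}\int\!\!\int_0^1|\dot\gamma|^2_w\d r\d\pi+\frac N2\ln(\cdots).
\]
Here the geodesics are only $\dist$-geodesics, so $\int_0^1|\dot\gamma|_w^2\d r$ is not $\dist_w(y,z)^2$, and this is the subtle point. To handle it, localize: apply the argument to a partition of a near-optimal $\dist_w$-curve from $y$ to $z$ into many short pieces, each connecting small balls. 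On a short piece between nearby points $p,q$, continuity of $w$ gives $|\dot\gamma|_w\approx e^{w(p)}|\dot\gamma|$, hence $\int|\dot\gamma|_w^2\approx e^{2w(p)}\dist(p,q)^2\approx\dist_w(p,q)^2$. Summing the pieces with a convex split of times (the Harnack estimate chains well under time subdivision), and using $\m$-a.e.\ continuity of $w$ together with Lebesgue points of $\ln\P^w_\cdot f$, we let $\epsilon\to0$ and refine the partition to recover the constant $\dist_w(y,z)^2$ for $\m_w$-a.e.\ $y,z$.

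The heat kernel statement follows by applying the first inequality to $f=p^w_\eta(x,\cdot)$, which lies in $L^1(\m_w)$ by Lemma~\ref{lem:HKtimechange}. We use the semigroup property $\P^w_\tau p^w_\eta(x,\cdot)=p^w_{\tau+\eta}(x,\cdot)$, symmetry of $p^w$, and let $\eta\to0$.
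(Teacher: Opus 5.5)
Your argument has the same skeleton as the paper's. BIP plans between normalized restrictions of $\m$ to small balls around $y$ and $z$ are test plans for $(M,\dist_w,\m_w)$, since $\m\le\m_w\le e^{8\k_T}\m$ and $\dist\le\dist_w\le e^{4\k_T}\dist$. Along them you integrate $\ln\P^w_\tau f$ with the Garofalo--Mondino chain rule, insert Li--Yau, absorb the gradient by Young's inequality, and let the radius go to $0$.

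You differ in how the kinetic term $\iint_0^1|\dot\gamma|_w^2\,\d r\,\d\pi$ along $\dist$-geodesics is compared with $\dist_w^2(y,z)$, and here your route is the better one. The paper does this globally via $\|w\|_\infty$, but that step does not close. With $\Gamma^w=e^{-2w}\Gamma$ one has $|\dot\gamma|_w\le e^{\bar w(\gamma)}|\dot\gamma|$, not $e^{-w}|\dot\gamma|$. So the global bound is $e^{2\|w\|_\infty}\dist^2(y,z)$, which dominates $\dist_w^2(y,z)$ rather than being dominated by it. Independently, the displayed inequality $\int e^{-2w}|\dot\gamma|^2\le e^{-2\|w\|_\infty}\int|\dot\gamma|^2$ is reversed. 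Hence the paper's argument only yields the Harnack inequality with $\dist_w^2$ replaced by $e^{2\|w\|_\infty}\dist_w^2$. That suffices for the later heat kernel and doubling lemma, since $\|w\|_\infty\le 4\k_T<4$ only changes constants.

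Your chaining over consecutive small balls along a near-optimal $\dist_w$-curve does give the sharp constant. The averaged estimates telescope, so Lebesgue points are needed only at $y,z$. The volume factors also telescope. The split $t_{i+1}-t_i\propto D_i$ turns $\sum_i D_i^2/(t_{i+1}-t_i)$ into $(\sum_i D_i)^2/(t-s)$.

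Two points need tightening.

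\textbf{The role of $w$.} Do not invoke continuity of $w$: $\m$-a.e.\ continuity says nothing along an $\m$-null curve, and two-sided control is not needed. Use $D_i\le e^{\sup_{B_i}\bar w}\,(\dist(p_i,p_{i+1})+2\epsilon)$, where $B_i$ is the ball about $p_i$ of radius $\dist(p_i,p_{i+1})+3\epsilon$; it contains all geodesics of the plan. Then use upper semicontinuity of $\bar w$ and reverse Fatou, with domination by $e^{\|w\|_\infty}|\dot\gamma|$. Letting first $\epsilon\to0$ and then the mesh go to $0$ gives $\limsup\sum_i D_i\le\int_0^1 e^{\bar w(\gamma_r)}|\dot\gamma_r|\,\d r\le\dist_w(y,z)+\delta$.

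\textbf{The time integral.} The exact value is $\frac N2\ln\frac{e^{2Ks/3}-1}{e^{2Kt/3}-1}$. This is smaller than the $\frac N2\ln\frac{1-e^{-2Ks/3}}{1-e^{-2Kt/3}}$ you wrote by exactly $\frac{NK(t-s)}{3}$, so you should not ``match the statement'' there. The statement itself is mistyped. Its first display has the correct Gaussian factor but this wrong volume factor. Its second display has the correct volume factor but puts $e^{2Kt/3}$ in the denominator of the Gaussian exponent. What your argument proves, and what the heat kernel version should be deduced from, is $\P^w_tf(z)\ge\P^w_sf(y)\exp\bigl(-\frac{e^{2Kt/3}\dist_w^2(y,z)}{4(t-s)}\bigr)\bigl(\frac{e^{2Ks/3}-1}{e^{2Kt/3}-1}\bigr)^{N/2}$.
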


\begin{proof}

Consider $f \in \Lonew$ such that $f \ge 0$ $\m_w$-a.e.~in $M$. Since $\m$ has a finite measure and $w$ is essentially bounded below away from zero, the measure $\m_w$ has a finite measure as well, so for any $\epsilon>0$, the function $f_\epsilon = f + \epsilon$ still belongs to $\Lonew$. For $z \in M$ and $\tau \ge 0$, set $$u_\epsilon (z,\tau) := \P^w_\tau f_\epsilon (z)$$ 
and notice that for any given $\tau >0$ it holds that $u_\epsilon(z,\tau)>0$ for $\m_w$-a.e.~$z \in M$.  By the BIP property of the original space $(M,\dist,\m)$, given $y,z \in M$ and $r>0$, there exists $\pi^r \in \mathrm{OptGeo}(\m_0^r,\m_1^r)$ concentrated on $\dist$-geodesics, where $$\m_0^r =\m(B_r(y))^{-1} 1_{B_r(y)}\m  \quad \text{and} \quad  \m_1^r=\m(B_r(z))^{-1} 1_{B_r(z)}\m.$$  Consider $s<t$ in $(0,+\infty)$. Acting as in \cite[(4.11)]{GarofaloMondino-13} with the metric measure space $(M,\dist_w,\m_w)$,
\begin{align*}
    \int \ln \left( \frac{u_\epsilon(\gamma_1,s)}{u_\epsilon(\gamma_0,t)}\right) \d \pi^r(\gamma) & \le \int \int_0^1 |d(\ln u_\epsilon(\cdot,t + \tau(s-t))) |_{w}(\gamma_\tau) |\dot{\gamma}|_{w}(\tau) \d \tau \d \pi^r(\gamma) \\
    & \phantom{=} - (t-s) \int \int_0^1 \left( \frac{\partial}{\partial \tau}  \ln u_\epsilon \right) (\gamma_\tau,t + \tau(s-t)) \d \tau \d \pi^r(\gamma).
\end{align*}
Apply the Li-Yau inequality from Proposition \ref{prop:LiYau} at time $\tau$ to the function $f_\epsilon$. Divide the resulting inequality by $e^{2K\tau/3}u_\epsilon^2$ to obtain  
\begin{equation*}
- \left(\frac{\partial}{\partial \tau} \ln u_\epsilon\right) (z,\tau)
\leq -e^{2K\tau/3} |d(\ln u_\epsilon(\cdot,\tau) )|_{w}^2(z) - \frac{NK}{3}\frac{e^{2K\tau/3}}{1-e^{2K\tau/3}} \, \cdot 
\end{equation*}
Together with Young's inequality with $\eta >0$, this gives
\begin{align*}
    \int \ln \left( \frac{u_\epsilon(\gamma_1,s)}{u_\epsilon(\gamma_0,t)}\right) \d \pi^r(\gamma) 
    & 
    \le \frac{\eta}{2}\int \int_0^1 |d(\ln u_\epsilon(\cdot,t + \tau(s-t)) )|_{w}^2(\gamma_\tau)  \d \tau \d \pi^r(\gamma) \\
    &\phantom{=}  + \frac{1}{2\eta} \int \int_0^1 |\dot{\gamma}|_{w}^2(\tau) \d \tau \d \pi^r(\gamma)\\
    & \phantom{=} - (t-s) \int \int_0^1 e^{-2K(t + \tau(s-t))/3}|d(\ln u_\epsilon(\cdot,t + \tau(s-t)) )|_{w}^2(\gamma_\tau) \d \tau \d \pi^r(\gamma) \\
    & \phantom{=}- (t-s) \int \int_0^1 \frac{NK}{3}\frac{e^{2K(t + \tau(s-t))/3}}{1-e^{2K(t + \tau(s-t))/3}}  \d \tau \d \pi^r(\gamma).
\end{align*}
Since
\[
- (t-s) e^{-2K(t + \tau(s-t))/3} \le - (t-s) e^{-2Kt/3}
\]
we may choose $\eta=2(t-s)\e^{2Kt/3}$ to cancel out the first and third terms in the previous right-hand side. We get
\begin{align*}
    \int \ln \left( \frac{u_\epsilon(\gamma_1,s)}{u_\epsilon(\gamma_0,t)}\right) \d \pi^r(\gamma) & \le 
    \frac{\e^{-2Kt/3}}{4(t-s)} \int \int_0^1 |\dot{\gamma}|_{w}^2(\tau) \d \tau \d \pi^r(\gamma)
    \\
    &\phantom{=}- (t-s) \int \int_0^1 \frac{N K}{3}\frac{e^{2K(t + \tau(s-t))/3}}{1-e^{2K(t + \tau(s-t))/3}}  \d \tau \d \pi^r(\gamma).\nonumber
\end{align*}
In order to conclude, note that
\begin{align*}
\lim\limits_{r \downarrow 0} \int \int_0^1 |\dot{\gamma}|_{w}^2(t) \d t  \d \pi^r (\gamma) & = \lim\limits_{r \downarrow 0} \int \int_0^1 e^{-2w(\gamma(t))}|\dot{\gamma}|^2(t) \d t  \d \pi^r (\gamma)\\
& \le e^{-2 \|w\|_\infty} \lim\limits_{r \downarrow 0} \int \int_0^1 |\dot{\gamma}|^2(t) \d t  \d \pi^r (\gamma) \\
& = e^{-2 \|w\|_\infty} \dist^2(y,z).
\end{align*}
Moreover, as pointed out in \cite{GarofaloMondino-13}, a direct computation shows that
\[
-(t-s) \int_0^1 \frac{N K}{3}\frac{e^{2K(t + \tau(s-t))/3}}{1-e^{2K(t + \tau(s-t))/3}}  \d \tau  = \frac{N}{2} \ln \left( \frac{1-e^{-2Kt/3}}{1-e^{-2Ks/3}} \right). 
\]
Therefore, letting $r \to 0$, we obtain
\[
\ln \left( \frac{u_\epsilon(y,s)}{u_\epsilon(z,t)}\right) \le \frac{e^{-2 \|w\|_\infty} \dist^2(y,z)}{4(t-s)e^{2Kt/3}} + \frac{N}{2} \ln \left( \frac{1-e^{2Kt/3}}{1-e^{2Ks/3}} \right).
\]
We get the conclusion by using that $e^{-2 \|w\|_\infty} \dist^2(y,z) \le \dist_w^2(y,z)$ and then by letting $\epsilon$ tend to $0$. 
\end{proof}

\begin{remark}
    Note that assuming $\k_T(M,\dist,\m,\kappa) = 0$ in Lemma~\ref{lem:LYtimechange} yields the form of the Li-Yau inequality that holds on smooth Riemannian manifolds with non-negative Ricci curvature.
\end{remark}

Next, we derive heat kernel bounds and a local volume doubling property. To this purpose, we use the exponential integrability of the heat semigroup of the time-changed space by adapting the arguments from Baudoin--Garofalo \cite[Theorem 3.1]{BaudoinGarofalo} to our setting. We generalize the latter result to spaces satisfying $\mathrm{BE}(-K,N)$ with $K>0$. The following arguments to obtain heat kernel bounds and volume doubling are standard, but since we are interested in the behavior of the constants, we give the details. 

We begin with the exponential integrability.

\begin{proposition}\label{lem:exp}
There exist $A=A(n)\in(0,1)$ and $C=C(n)>0$ such that for any $x\in \M$ and $r\in(0,1/\sqrt{KN}]$, for $\m_w$-a.e.~$y\in B_w(x,r/2)$,
    \[
    \P_{Ar^2}^w [\One_{B_w(x,r)}](y)\geq C.
    \]

\end{proposition}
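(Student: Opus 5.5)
We follow Baudoin--Garofalo \cite[Theorem 3.1]{BaudoinGarofalo}. The plan is to show that, under $\BE(-K,N)$, the heat flow started from the indicator of a ball keeps a definite mass inside the ball for times of order $r^2$, uniformly for points in the half ball. This rests on two estimates for the complement $B_w(x,r)^c$. The first is an exponential integrability bound of the type
\[
\int_M \e^{\frac{\dist_w(x,\cdot)^2}{c_0 t}}\,p^w_t(y,\cdot)\d\m_w \le C_0 \qquad \text{for $t\le 1/(KN)$,}
\]
which bounds $\P^w_t[\One_{B_w(x,r)^c}](y)$. The second is the conservativeness $\P^w_t\One_M=\One_M$, which holds because $\m_w(M)<\infty$. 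Together they give
\[
\P^w_t[\One_{B_w(x,r)}](y)=1-\P^w_t[\One_{B_w(x,r)^c}](y).
\]

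\textbf{Step 1.} Fix a bounded $\dist_w$-Lipschitz function $\psi$ with $\Gamma^w(\psi)\le 1$, and consider
\[
F(s)=\int \e^{\alpha\psi}\, \P^w_s g\d\m_w, \qquad g\ge0 .
\]
We will derive a Davies/Grigor'yan-type integrated maximum principle directly from the Dirichlet form, which does not require curvature. Take the weight $\xi(s,z)=\lambda\psi(z)-\lambda^2 s$, so that $\partial_s\xi+\Gamma^w(\xi)\le 0$. Then $s\mapsto\int (\P^w_s g)^2\e^{2\xi(s,\cdot)}\d\m_w$ is nonincreasing.

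\textbf{Step 2.} Apply Step 1 with $g=\One_{B_w(y,\rho)}$ and $\psi=\dist_w(x,\cdot)\wedge R$. Combine it with the Harnack inequality of \Cref{lem:LYtimechange}, used to pass from $L^2$ bounds to pointwise bounds, and optimize in $\lambda$. For $y\in B_w(x,r/2)$ we have $\dist_w(y,\cdot)\ge r/2$ on $B_w(x,r)^c$, so this yields a Gaussian tail bound
\[
\P^w_{t}[\One_{B_w(x,r)^c}](y)\le C_1\,\e^{-r^2/(c_1 t)} \qquad\text{for } t\le r^2\le 1/(KN).
\]
A simpler alternative for this step is to use \Cref{prop:LiYau} and \Cref{lem:LYtimechange} with $f$ the heat kernel itself. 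This gives Gaussian upper bounds for $p^w$ in terms of the on-diagonal quantity, and these can be integrated over dyadic annuli.

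The restriction $r\le 1/\sqrt{KN}$ keeps $Kt\le 1/N$, so all factors $\e^{2Kt/3}$ and $\bigl((1-\e^{-2Ks/3})/(1-\e^{-2Kt/3})\bigr)^{N/2}$ are bounded by constants depending only on $N$, hence only on $n$, since $N\le n+3(1\vee(n-2)^2)$.

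\textbf{Step 3.} Choose $A=A(n)$ small enough that $C_1\e^{-1/(c_1A)}\le 1/2$. Then conservativeness gives $\P^w_{Ar^2}[\One_{B_w(x,r)}](y)\ge 1/2$, which is the claim with $C=1/2$.

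The main obstacle is Step 2. Controlling the annuli sum requires some form of volume comparison, which is not yet available; Steps 1 and 3 are routine. I would therefore try the weighted-$L^2$ route: it needs only the Dirichlet form, \Cref{lem:LYtimechange} and conservativeness, and it avoids volume doubling, which is what is to be proved afterwards.
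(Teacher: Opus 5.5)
\textbf{There is a genuine gap, and it sits in Step 2, which carries the whole proposition.} Your reduction to a tail estimate via conservativeness is harmless. The integrated maximum principle of Step 1 is also correct and curvature-free. But it only gives $L^2$-type control. Take $E=B_w(x,r)^c$ and $F=B_w(y,\rho)$, with either $\One_E$ or $\One_F$ as initial datum. You obtain bounds like $\int_F \P^w_t[\One_E]\d\m_w\le (\m_w(E)\,\m_w(F))^{1/2}\e^{-\dist_w(E,F)^2/(ct)}$, possibly with $\m_w(E)$ replaced by an exponentially weighted mass of $E$. Passing to the value at $y$ with Lemma~\ref{lem:LYtimechange} means averaging over $F=B_w(y,\sqrt t)$, i.e.\ dividing by $V_w(y,\sqrt t)$. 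What you actually obtain is
\[
\P^w_t[\One_{B_w(x,r)^c}](y)\lesssim \Bigl(\frac{\m_w(B_w(x,r)^c)}{V_w(y,\sqrt t)}\Bigr)^{1/2}\e^{-r^2/(ct)} .
\]
At $t=Ar^2$ the Gaussian factor is the fixed number $\e^{-1/(cA)}$. The volume ratio, however, is not bounded in $r$: it blows up as $r\downarrow 0$ as soon as $\m_w(\{y\})=0$. Localizing $E$ to annuli by exponential weights only helps if you already know $V_w(x,2^kr)\le C(2^k/\sqrt A)^{\nu}V_w(y,\sqrt A r)$. That is the doubling property the next lemma derives \emph{from} this proposition. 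So your $C_1$ in Step 3 depends on $r$, and the weighted route does not avoid volume comparison: it hides it in the $L^2$-to-pointwise step. This is also not what Baudoin--Garofalo's Theorem 3.1 does.

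\textbf{The missing idea is to use the curvature condition itself, not just its Harnack consequence.} The paper integrates Proposition~\ref{prop:gauge} over $(0,t)$ with $a(s)=\tau+t-s$ and $\gamma(s)=-\frac{N}{4a(s)}(1+2Ka(s))$. These are chosen so that $a'-4a\gamma/N-2Ka=0$. The resulting inequality is applied to $f=\e^{\lambda h}$ with $h$ bounded and $|dh|_w\le 1$. The paper then discards the $\Phi^w(0)$ term, uses $\P^w_t(\e^{\lambda h}|dh|_w^2)\le \P^w_t\e^{\lambda h}$, and chooses $\tau=t\,\eta(\lambda^2 t)$. This gives $\partial_t\psi+G(\lambda,t)\ge 0$ for $\psi=\lambda^{-1}\ln \P^w_t\e^{\lambda h}$, hence the volume-free Herbst-type bound $h\le \psi(\lambda,t)+\Phi(\lambda,t)$.

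For $h=-\dist_w(x,\cdot)\wedge r$ one has $\e^{\lambda h}\le \e^{-\lambda r}\One_{B^c}+\One_B$. Sub-Markovianity then gives directly a \emph{lower} bound on $B_w(x,r/2)$:
\[
\P^w_t[\One_B]\ge \e^{-\lambda r/2}\bigl(\e^{-\lambda\Phi(\lambda,t)}-\e^{-\lambda r/2}\bigr).
\]
Take $\lambda=1/r$ and $t=Ar^2$. The substitution $s=t/r^2$ removes $r$ from $\lambda\Phi$, except in one term carrying the factor $KNr^2\le 1$. Choosing $A=A(n)$ small then makes $\lambda\Phi$ as small as needed. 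No heat kernel, Harnack inequality or volume quantity enters, which is exactly why the subsequent doubling argument is not circular.
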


\begin{proof}We employ Proposition~\ref{prop:gauge} with specific choices of $a$ and $\gamma$ and initial conditions for the heat equation. To this end, we split the proof into several steps.

\textbf{Step 1.} We prove that for all non-negative functions $f \in L^\infty(\M, \m_w)$, $\varphi \in \mathrm{Test}_\infty(M,\dist_w,\m_w)$, and all $\tau,t>0$,
\begin{align*}
    \tau\int\Phi^w(t)\varphi\d \m_w-(\tau+t)\int\Phi^w(0)\varphi\d \m_w
    & \geq-C_1\int [\Delta^w P_t^wf]\varphi\d\m_w  -\frac{N}{8}C_2\int [P_t^wf]\varphi\d \m_w
    \end{align*}
    where
    \[
    C_1 =  C_1(t,K,\tau):= t(1+2K(\tau+t)) \quad \text{and} \quad C_2 =  C_2(t,K,\tau):= (1+2K(\tau+t))^2\ln\left(1+\frac{t}{\tau}\right).
    \]

We apply Proposition \ref{prop:gauge} with $a : (0,t) \to \mathbb{R}_+$ and $\gamma : (0,t) \to \mathbb{R}$ defined by
    \[
    a(s)=\tau+t-s, \qquad \gamma(s)=-\frac{N}{4a(s)}(1+2Ka(s)).
    \]
    Observe that
    \[
    a'=-1, \qquad \frac{4a\gamma}{N}=-(1+2Ka),\qquad \frac{2a\gamma^2}{N}=\frac{N}{8a}(1+2Ka)^2.
    \]
    In particular,
    \[
    a' - \frac{4a\gamma}{N} - 2 K a = 0.
    \]
    In virtue of the dominated convergence theorem, we assume that $f \ge \delta$ $\m_w$-a.e.~for some $\delta>0$. Integrating the inequality from Proposition \ref{prop:gauge} from $0$ to $t$ with the previous $a$ and $\gamma$, we obtain
    
    \begin{align*}
    \tau\int\Phi^w(t)\varphi\d \m_w-(\tau+t)\int\Phi^w(0)\varphi\d \m_w & 
    \geq -\int_0^t(1+2Ka(s))\d s\int [\Delta^w P_t^wf]\varphi\d\m_w\\
    & \phantom{=} \, -\frac{N'}{8}\int_0^T\frac{(1+2Ka(s))^2}{a(s)}\d s\int [P_t^wf]\varphi\d \m_w.
    \end{align*}
Since
\[
\int_0^t(1+2Ka(s))\d s\leq t(1+2K(\tau+t)) = C_1 \]
and \begin{align*} \phantom{=} \int_0^t\frac{(1+2Ka(s))^2}{a(s)}\d s 
& \leq \left[ \max_{[0,t]}(1+2Ka)^2 \right]\int_0^t\frac{\d s }{a(s)}=(1+2K(\tau+t))^2\ln\left(1+\frac{t}{\tau}\right) = C_2,
\end{align*}
we get the desired result.

\textbf{Step 2.} For any $h\in L^\infty(\M,\m_w)$ such that $|dh|_{w}^2\le 1$ $\m_w$-a.e., any $\lambda,t >0$,
\[
h \le \psi(\lambda,t) + \Phi(\lambda,t)
\]
where
\[
\psi(\lambda,t)=\frac{1}{\lambda}\ln \P_t^w\e^{\lambda h}
\]
and
\[
\Phi(\lambda,t) = \int_0^t \lambda \eta (\lambda^2 t) + \frac{N}{8 \lambda} \ln \left( 1 + \frac{1}{\eta(\lambda^2 t)} \right) \left( \frac{1}{t} + 2 K (\eta(\lambda^2t )+1)\right) \d t, \quad \eta(x) = \frac{1}{2} \left( \sqrt{1 + \frac{1}{2x}}- 1\right).
\]

Notice that $\P_t^w \e^{\lambda h}=\e^{\lambda\psi(\lambda, t)}.$ Therefore, by Jensen's inequality, we have
\[
\P_t^wh=\frac{1}{\lambda}\P_t^w\ln \e^{\lambda h}
\leq \frac{1}{\lambda}\ln \P_t^w\e^{\lambda h}=\psi(\lambda,t) \quad \text{$\m_w$-a.e.~on $M$.}
\]
Apply Step 1 to the function 
$f = \e^{\lambda h}$ to obtain
\begin{multline*}
    \tau\lambda^2\int P_t^w(\e^{\lambda h}|dh|_w^2)\varphi\d \m_w
    -(\tau+t)\lambda^2\int \e^{\lambda \psi(\lambda,t)}|d\psi(\lambda,t)|_w^2\varphi\d \m_w \\
        \geq -C_1\int [\Delta^w P_t^w\e^{\lambda h}]\varphi\d\m_w -\frac{N}{8}C_2\int \e^{\lambda \psi(\lambda,t)}\varphi\d \m_w.
\end{multline*}
Note that the second term on the left-hand side is non-negative. We can therefore drop it to obtain a weaker inequality. Since $\P_t^w$ is positivity-preserving,
\[
\P_t^w(\e^{\lambda h}|d h|_w^2)\leq \P_t^w\e^{\lambda h}
=\e^{\lambda\psi(\lambda,t)}.\]
Morever,
\[
\Delta^w \P_t^w\e^{\lambda h} = \left. \frac{\partial }{\partial t} \right|_{s=t} \P_s^w\e^{\lambda h} = \lambda \e^{\lambda \psi(\lambda,t)}\frac{\partial\psi}{\partial t}(\lambda, t).
\]
Thus the above inequality yields
\[
    \tau\lambda^2\int \e^{\lambda\psi(\lambda,t)}\varphi\d \m_w
        \geq -C_1\int \lambda \e^{\lambda \psi(\lambda,t)}\frac{\partial\psi}{\partial t}(\lambda, t) \varphi\d\m_w -\frac{N}{8}C_2\int \e^{\lambda \psi(\lambda,t)}\varphi\d \m_w.
\]
Rearrange to get
\[
    \int \left( \frac{\partial\psi}{\partial t}(\lambda,t)  +\frac{\tau \lambda}{C_1} + \frac{NC_2}{8\lambda C_1} \right) C_1 \lambda\e^{\lambda \psi(\lambda,t)}\varphi\d \m_w\geq0 .
\]
Since this holds for any $\varphi\in \Linfw$ such that $\varphi \ge 0$ $\m_w$-a.e. we get that 
\[
   0  \leq  \frac{\partial\psi}{\partial t}(\lambda,t)  + \frac{\tau \lambda}{C_1} + \frac{NC_2}{8\lambda C_1} = \frac{\partial\psi}{\partial t}(\lambda,t)  + \frac{\tau \lambda}{t(1+2K(\tau+t))} + \frac{N}{8\lambda}\frac{(1+2K(\tau+t))}{t}\ln\left(1+\frac{t}{\tau}\right) \qquad \text{$\m_w$-a.e.}
\]
Choose
\[
\tau = t \eta (\lambda^2 t)
\]
to get
\[
    \frac{\partial\psi}{\partial t} (\lambda,t)
    +\frac{\lambda \eta (\lambda^2 t)}{1 + 2Kt(1+\eta(\lambda^2 t))} + \frac{N}{8 \lambda} \ln \left( 1 + \frac{1}{\eta(\lambda^2 t)} \right) \left( \frac{1}{t} + 2 K (\eta(\lambda^2t )+1)\right)
     \geq0.
   \]
   Since $1 + 2Kt(1+\eta(\lambda^2 t)) \ge 1$, we get
   \[
    \frac{\partial\psi}{\partial t} (\lambda,t)
    +G(\lambda,t)
     \geq0
   \]
   where
   \[
   G(\lambda,t) \df \lambda \eta (\lambda^2 t) + \frac{N}{8 \lambda} \ln \left( 1 + \frac{1}{\eta(\lambda^2 t)} \right) \left( \frac{1}{t} + 2 K (\eta(\lambda^2t)+1)\right).
   \]
 Integrating this inequality from $0\leq \epsilon\leq t$ yields 
\[\lambda\psi(\lambda,t)+\lambda\int_{\epsilon}^t G(\lambda,s)\d s\geq \lambda \psi(\lambda,\epsilon)\geq \lambda \P_{\epsilon}^wh\]
$\m_w$-almost everywhere. In particular, since $\P_\epsilon^wh$ converges in $L^2(\M,\mu^w)$ to $h$ as $\epsilon\to0$, there is a subsequence $(\epsilon_n)_n$ such that $\P_{\epsilon_n}^wh\to h$, $n\to\infty$, $\m_w$-almost everywhere. Hence
\[
h=
\lim_{n\to \infty}\P_{\epsilon_n}^w h
\leq \psi(\lambda,t)+\Phi(\lambda,t)
\]
almost everywhere, as claimed.

\textbf{Step 3.} For $\m_w$-a.e.~$y\in B_w(x,r/2)$, we have  \[
\P_t^w(\One_B)(y)\geq \e^{-\lambda r/2}(\e^{-\lambda \Phi(\lambda,t)(y)}-\e^{-\lambda r/2 }).
\]
Apply the previous step to $h=-\dist_w(x,\cdot)\wedge r$ where $x \in M$ and $r \in (0,1/\sqrt{K N}]$. Then $h\in L^\infty(\M,\m_w)$ with $|dh|_w \le 1$ $\m_w$-a.e.~and we clearly have
\[
\e^{\lambda h}\leq \e^{-\lambda r}\One_{B^c}+\One_B
\]
where $B=B_w(x,r)$.  Apply $\P_t^w$ for $t>0$ to get 
\[
\e^{\lambda \psi(\lambda, t)}=\P_t^w\e^{\lambda h}\leq \e^{-\lambda r }+\P_t^w(\One_B) \qquad \text{$\m_w$-a.e.}
\]
Since
\[
h\geq -r/2 \qquad \text{on}\  B_w(x,r/2),
\]
we have for $\m_w$-a.e.~$y\in B_w(x,r/2)$
\[
\e^{-\lambda r/2}\leq \e^{\lambda h(y)}\leq \e^{\lambda\psi(\lambda,t)(y)}\e^{\lambda \Phi(\lambda,t)(y)}
\]
and thus
\[
\P_t^w(\One_B)(y)\geq \e^{\lambda \psi(\lambda, t)(y)}-\e^{-\lambda r }
\geq \e^{-\lambda r/2-\lambda \Phi(\lambda,t)(y)}-\e^{-\lambda r }
=\e^{-\lambda r/2}(\e^{-\lambda \Phi(\lambda,t)(y)}-\e^{-\lambda r/2 }).
\]

\textbf{Step 4.} Finally, we conclude by finding the constants $A\in(0,1)$ and $C>0$. Choose $\lambda=1/r$ and $t=Ar^2$ to obtain for $\m_w$-a.e.~$y\in B_w(x,r/2)$
\[
\P_{Ar^2}^w(\One_B)(y)
\geq\e^{- 1/2}(\e^{-(1/r) \Phi(1/r,Ar^2)(y)}-\e^{- 1/2 }).
\]
In order to bound the right-hand side from below, we estimate $F(1/r,Ar^2)$ from above as follows. Write
\[
    \frac{1}{r} \Phi(1/r,Ar^2)=\frac{1}{r}\int_0^{Ar^2}G(1/r,\tau)\d \tau
    \\
    =S_1+S_2+S_3
    \]
    where
    \[S_1=\frac{1}{r^2}\int_0^{Ar^2}
    \eta\left( \frac{t}{r^2}\right)\d t
    ,\qquad S_2=\frac{N}{8}\int_0^{Ar^2}\frac{1}{t}\ln\left(1+\frac{1}{\eta\left( \frac{t}{r^2}\right)}\right) \d t,\]
    and 
    \[
     S_3=\frac{2KN}{8}\int_0^{Ar^2}\bigg( \eta\left( \frac{t}{r^2}\right) +1 \bigg) \ln\left(1+\frac{1}{\eta\left( \frac{t}{r^2}\right)}\right)  \d t.
\]
The substitution $s=t/r^2$ yields 
\[
S_1=\int_{0}^{A}
    \eta(s) \d s
    ,\qquad S_2=\frac{N}{8}\int_0^A \ln \left(1 + \frac{1}{\eta(s)} \right) \frac{\d s}{s},
\]
and 
\[
     S_3=\frac{2KN}{8}r^2\int_0^{A}\bigg( \eta\left( s\right) +1 \bigg) \ln\left(1+\frac{1}{\eta\left(s\right)}\right)  \d s
\]
Since $\eta(s)\sim 1/\sqrt s$ as $s\to 0$, the integrals appearing in $S_1$, $S_2$, and $S_3$ are finite for $A>0$ and independent of $N$ and $K$. Recall that $N=n+3(1\vee (n-2)^2) \k\leq n+3\frac{(1\vee (n-2)^2)}{1\wedge (n-2)} =:\omega(n) $. Hence, for any $\epsilon>0$, there is a constant $A\in(0,1)$ such that 
\[
S_1\leq \epsilon, \qquad S_2\leq \epsilon \omega(n), \qquad \text{and}\qquad S_3\leq \epsilon KN r^2.
\]
Hence, for $\m_w$-a.e.~$y\in B_w(x,r/2)$,
\[
\P_{Ar^2}^w(\One_B)(y)\geq \e^{\lambda \psi(\lambda, t)(y)}-\e^{-\lambda r }
\geq \e^{-1/2}(\e^{-(1+\omega(n))\epsilon -\epsilon KNr^2)}-\e^{-1/2 }).
\]
This yields the claim.
\end{proof}

Next, we obtain heat kernel bounds, volume doubling and local $L^2$ Poincaré inequality via the Li-Yau estimate and the exponential integrability. From now on, we use the shorthand 
\begin{align*}
V_w(x,r) \df \m_w(B_w(x,r)).    
\end{align*}

\begin{lemma}
    There exists a constant $c_n>0$ such that, for all $t\leq 4/(KN)$, $\m_w$-a.e.~$x\in M$, and $\m_w$-a.e.~$y,z\in B_w(x,\sqrt t/2)$,
    \[
    \frac{c_n^{-N}}{V_w(x,\sqrt{t})}
    \leq p_t^w(y,z)\leq \frac{c_n^{N}}{V_w(x,\sqrt{t})}.
    \]
    Moreover, for $\m_w$-a.e.~$x\in \M$ and $r\leq 1/(KN)$, we have 
    \[
    V_w(x,2r)\leq c_n^{N}V_w(x,r).
    \]
    Finally, we have the local Poincar\'e inequality: for $\m_w$-a.e.~$x\in \M$ and $r\leq 1/(KN)$, 
    \[
    \|\varphi-\bar\varphi\|_{L^2(B_w(x,r),\m_w)}^2\leq c_N\int_{B_w(x,2r)}\Gamma^w(\varphi)\d\m_w
    \]
    for all $\varphi\in\Test(\M)$, where $\bar\varphi=\m_w(B_w(x,r))^{-1}\int_{B_w(x,r)}\varphi\d\m_w$.
\end{lemma}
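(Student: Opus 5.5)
The plan is the standard Li--Yau route: two-sided heat kernel bounds first, then volume doubling as a consequence of those bounds, then the Poincar\'e inequality from the heat semigroup. Throughout, the constraint $t\le 4/(KN)$ (and $r\le 1/(KN)$) is what keeps the factors $e^{\pm 2Kt/3}$ and $(1-e^{-2Ks/3})/(1-e^{-2Kt/3})$ comparable to $1$ and to $s/t$, up to absolute constants. The bound $N\le\omega(n)$ from the proof of Proposition~\ref{lem:exp} is what lets all constants be written as $c_n^N$.

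\textbf{Heat kernel bounds.}
\begin{enumerate}
\item \emph{Lower bound.} Apply Proposition~\ref{lem:exp} with $r=\sqrt{t/A}$; this is admissible after shrinking the range of $t$ by a factor depending only on $n$. It gives, for $y\in B_w(x,\sqrt t/2)$,
\[
\int_{B_w(x,\sqrt{t/A})}p^w_{t}(y,z)\,\d\m_w(z)\ge C.
\]
Then use the Harnack inequality of Lemma~\ref{lem:LYtimechange} in the space variable, comparing $p^w_{t}(y,z)$ with $p^w_{t'}(y',z)$ for $t'$ a fixed multiple of $t$. Since $\dist_w^2/(t'-t)$ is bounded on the relevant balls, this turns the integral bound into
\[
p^w_{ct}(y,z)\ge \frac{c_n^{-N}}{V_w(x,\sqrt{t})}
\]
for $y,z$ in the half ball. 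The time parameter is adjusted afterwards by another application of Harnack in time, at a cost of $(s/t)^{N/2}$.
\item \emph{Upper bound.} Use Harnack in the opposite direction together with stochastic completeness, $\int p^w_{2t}(z',y)\,\d\m_w(z')\le1$ ($\m_w$ is finite, so the semigroup is conservative). Integrating $p^w_t(y,z)\le C_n^N p^w_{2t}(z',y)$ over $z'\in B_w(x,\sqrt t)$ gives $p^w_t(y,z)V_w(x,\sqrt t)\le C_n^N$.
\end{enumerate}

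\textbf{Volume doubling.} Compare the lower bound at scale $2r$ with the upper bound at scale $r$, for $y=z$ near $x$. Both describe $p^w_{t}(x,x)$ with $t\simeq r^2$ and $t\simeq 4r^2$, and Harnack relates these two times. This yields $V_w(x,2r)\le c_n^N V_w(x,r)$ for $r$ small in terms of $1/\sqrt{KN}$. The statement's range $r\le 1/(KN)$ is understood modulo that normalization; I would state whichever range the computation naturally gives.

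\textbf{Poincar\'e inequality.} Follow the semigroup argument of Saloff-Coste/Jerison type. From the Li--Yau gradient bound (Proposition~\ref{prop:LiYau}), or directly from $\BE(-K,\infty)$, derive the reverse local Poincar\'e inequality and the local Poincar\'e inequality
\[
\P^w_t(\varphi^2)-(\P^w_t\varphi)^2\le C\,t\,\P^w_t\Gamma^w(\varphi),
\]
with $C\le (e^{2Kt}-1)/K\lesssim t$ for $t\lesssim 1/K$. Evaluate at $y\in B_w(x,r)$ with $t=r^2$ and integrate over $B_w(x,r)$. Use the kernel lower bound on $B_w(x,r)\times B_w(x,r)$ to bound the left side below by $V_w^{-1}\int\!\!\int_{B\times B}|\varphi(y)-\varphi(z)|^2$. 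Use the upper bound together with Gaussian-type decay to bound the right side above. That decay is the obstacle: the kernel upper bound is only local, so controlling $\P^w_t\Gamma^w(\varphi)$ by $\int_{B_w(x,2r)}\Gamma^w(\varphi)$ needs localization.

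I would try first to avoid this by invoking the doubling property together with the weak-Poincar\'e-from-local-heat-kernel-lower-bound machinery. Concretely: Dirichlet heat kernel lower bounds on balls imply Poincar\'e with enlarged balls. If that is too heavy, an alternative is to replace $\varphi$ by $\chi\varphi$, with $\chi$ a cutoff, and accept $B_w(x,c_2r)$ with $c_2>2$. Getting exactly the factor $2$ may require a Whitney-type argument using doubling (Jerison).
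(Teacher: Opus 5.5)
Your two-sided kernel bounds are sound. The upper bound is the paper's argument. Your lower bound, which integrates the Harnack inequality of Lemma~\ref{lem:LYtimechange} in the space variable against Proposition~\ref{lem:exp}, is a valid alternative to the paper's route. The paper instead gets an on-diagonal bound from $\P^w_s\One_{B_w(x,\sqrt t/2)}(y)\le p^w_{2s}(y,y)^{1/2}V_w(x,\sqrt t)^{1/2}$ (Cauchy--Schwarz and the semigroup property) and then moves off the diagonal. One bookkeeping point: your integration produces $V_w(x,\sqrt{t/A})$, not $V_w(x,\sqrt t)$. So the normalization must come from renaming the time variable, not from doubling, which is not yet available.

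The genuine error is in the doubling step: you pair the wrong bounds. Lemma~\ref{lem:LYtimechange} only bounds an earlier kernel by a later one, $p^w_{r^2}(z,z)\le C\,p^w_{4r^2}(z,z)$ with $C\le 2^N$. Combined with the lower bound at scale $2r$ and the upper bound at scale $r$, this gives no relation between $V_w(x,r)$ and $V_w(x,2r)$. The comparison that would chain your two bounds is $p^w_{4r^2}(z,z)\le p^w_{r^2}(z,z)$. It is true, since the on-diagonal kernel is non-increasing in time, but it yields only the trivial $V_w(x,r)\le c_n^{2N}V_w(x,2r)$. You need the lower bound at the small scale and the upper bound at the large scale, as in the paper: for $z\in B_w(x,r/2)$,
\[
\frac{c_n^{-N}}{V_w(x,r)}\le p^w_{r^2}(z,z)\le C\,p^w_{4r^2}(z,z)\le \frac{C\,c_n^{N}}{V_w(x,2r)}.
\]

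For the Poincar\'e inequality the paper gives no argument; it calls it standard. So leaving the localization open is no worse than the source, but neither remedy you list closes it as stated.

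The cutoff fails. $\Gamma^w(\chi(\varphi-a))$ contains $(\varphi-a)^2\Gamma^w(\chi)$, which puts $r^{-2}\int_{B_w(x,c_2r)}|\varphi-a|^2\,\d\m_w$ on the right-hand side. That is the very quantity being estimated, on a larger ball, so the argument is circular; $\chi\varphi$ without subtracting a constant is worse.

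The Dirichlet-kernel route is the right kind of tool, but it needs a lower bound for the heat kernel killed outside a ball. That does not follow from the global Harnack inequality of Lemma~\ref{lem:LYtimechange}, which concerns solutions on all of $M$. It requires extra input, such as an exit-time estimate or off-diagonal Gaussian upper bounds.

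Without localization, your semigroup argument does give $\int_{B_w(x,r)}|\varphi-\bar\varphi|^2\,\d\m_w\le C r^2\,\Ch^w(\varphi)$. This is in fact the only form invoked in the paper's proof of Theorem~\ref{thm:main1}.
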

\begin{proof}
    Let $x\in\M$, $s,t>0$. Then, for $\m_w$-a.e.~$y\in M$, we have
    \begin{align*}
    \P_{s}^w\One_{B_w(x,\sqrt t/2)}(y)& =\int p_{s}^w(y,z)\One_{B_w(x,\sqrt t/2)}(z)\d \m_w(z)
    \\
    & \leq 
    \left(\int p_{s}^w(y,z)^2\d \m_w(z)\right)^{\frac{1}{2}}V_w(x,\sqrt{t})^\frac{1}{2}\\
    & =p_{2s}^w(y,y)^\frac{1}{2}V_w(x,\sqrt{t})^\frac{1}{2}.
    \end{align*}
    By Proposition~\ref{lem:exp}, there are $A=A(n)\in(0,1)$ and $C=C(n)>0$ such that, for all $t\leq 4/(KN)$ and $\m_w$-a.e.~$y\in B_w(x,\sqrt t/2)$,
    \[
    C\leq \P^w_{At/4}\One_{B_w(x,\sqrt t/2)}(y)\leq p^w_{At/4}(y,y)^\frac{1}{2}V_w(x,\sqrt{t})^\frac{1}{2}.
    \]
    Apply the Harnack inequality for $p^w$ from Lemma~\ref{lem:LYtimechange} to obtain
    \[
    \frac{c^2}{V_w(x,\sqrt{t})}\le p^w_{At/4}(y,y)
    \leq p_t^w(y,z)
    \e^{\frac{\dist_w(z,y)^2}{4(t-At/4)\e^{2Kt/3}}}\left(\frac{\e^{2Kt/3}-1}{\e^{KAt/6}-1}\right)^{\frac{N}{2}}
    \]
    for $\m_w$-a.e.~$z,y\in B_w(x,\sqrt t/2)$ and all $t\leq 4/(KN)$. Estimating the factors on the right-hand side further, we obtain
    \[
    \e^{\frac{\dist_w(z,y)^2}{4(t-At/4)\e^{2Kt/3}}}\leq \e^{\frac{1}{(4-A)\e^{2Kt/3}}}\leq 1
    \]
    and, by the mean-value theorem,
    \[
    \frac{\e^{2Kt/3}-1}{\e^{KAt/6}-1}
    =\frac{2}{A}\frac{\e^{2Kt/3}-1}{2Kt/3-0}\frac{KAt/6-0}{\e^{KAt/6}-1}
    \leq 
    \frac{2}{A}\e^{2Kt/3}\leq \frac{2}{A}\e^{2K4/(3Kn)}=\frac{2}{A}\e^{8/3n},
    \]
    where we used $t\leq 4/(KN)\leq 4/(Kn)$. Hence, the lower bound follows.

    Next, we consider the heat kernel upper bound. By Lemma~\ref{lem:LYtimechange}, for $\m_w$-a.e.~$y,z,v\in B_w(x,\sqrt t)$ and $0<t\le T/4$, we have 
\[
p_t^w(y,z)\leq \e^{\frac{\dist_w(z,y)^2}{4t\e^{2K{2t}/3}}}\left(\frac{\e^{4Kt/3}-1}{\e^{2Kt/3}-1}\right)^{\frac{N}{2}}p_{2t}^w(z,z)\leq \e^{4N/3n}p_{2t}^w(z,z)\leq \e^{8N/3n}p_{4t}^w(z,v)
\]
    Integration in $v \in B_w(x,\sqrt t)$ with respect to $\m_w$ yields for $\m_w$-a.e.~$y,z\in B_w(x,\sqrt t)$
    \[p_t^w(y,z)V_w(x,\sqrt{t})\leq c_n^{N}\int_{B_w(x,\sqrt t)}p^w_{4t}(z,v)\d\m_w(v)\leq c_n^{N},
    \]
    i.e., the upper bound. Finally, we are concerned with the doubling property. Setting $t=4r^2\leq 4/(KN)$, we have, for $\m_w$-a.e.~$z\in B_w(x,r)$,
    \[
    V_w(x,2r)\leq \frac{c_n^{N}}{p_{4r^2}(z,z)}\leq \frac{c_n^{2N}}{p_{r^2}(z,z)}\le c_n^{3N}V_w(x,r).
    \]
    The statement about the Poincar\'e inequality is by now standard, and the constant is independent of $N$ since $N\le c_n$ by its definition and the assumption on $\k$. This implies the claim.
\end{proof}

The spaces $(\M,\dist)$ and $(\M,\dist_w)$ are quasi isometric. Moreover, we have $\Ch=\Ch^w$.  This yields the volume doubling property and weak Poincar\'e inequalities on the original space $(\M,\dist,\m)$. Furthermore, \cite{Sturm95_II,Sturm-96} yields localized full Gaussian heat kernel bounds and the scale-invariant Harnack inequality on $(\M,\dist,\m)$.

\begin{theorem}\label{thm:main1}
    Let $(\M,\dist,\m)$ be a locally compact length infinitesimally Hilbertian space with strongly regular Dirichlet form $\Ch$ and intrinsic metric~$\dist$. Assume  $\m(\M)<\infty$ and  $(\M,\dist,\m)$ satisfies $\BE(-\kappa,n)$ for some $\kappa \in \mathcal{M}_0(M,\dist,\m)$ such that
\[\k_T= \k_T(M,\dist,\m,\kappa)<1\wedge \frac{1}{2|c-2|}.\]
  \begin{enumerate}
        \item
    The space $(\M,\dist,\m)$ satisfies the local doubling property and a family of local weak Poincar\'e inequalities. More precisely, there exist constants $c_1=c_1(\k_T,n,\mu)\geq 1$, $c_2=c_2(\k_T,n)\geq 1$ and $N=N(\k_T,n)\geq n$ such that, for all $0<r\leq R\leq (4/(KN))\wedge (\diam\M/c_2)$, we have
    \[
    \frac{V(x,R)}{V(x,r)}\leq c_1\left(\frac{R}{r}\right)^{N},\qquad \int_{B(x,r)}|\varphi-\bar\varphi|^2\d\m\leq c_1r^2\int_{B(x,c_2 r)}\Gamma(\varphi)\d\m.
    \]
    In particular, $N\to n$ when $\k_T\to 0$.
     \item The space $(\M,\dist,\m)$ satisfies a scale-invariant parabolic Harnack inequality. More precisely, there exist constants $c_1=c_1(\k_T,N,\mu)>0$ and $c_2=c_2(\k_T,N,\mu)>0$ such that, for all $x\in M$, $t\in\RR$, $r\in(0,(4/(KN))\wedge (\diam\M/c_2)]$, and all $u$ solving $\partial_tu=\Delta u $ weakly in $B(x,2r)$, we have 
    \[
    \sup_{[t-r^2,t-3r^2/4]\times B(x,r)} u \leq c_1 \inf_{[t-r^2/4,t]\times B(x,r)} u.
    \]
    \item The heat kernel $p$ of $(\M,\dist,\m)$ satisfies double-sided Gaussian heat kernel bounds. More precisely, there exist constants $c_1=c_1(\k_T,n)\ge 1$ and $c_2=c_2(\k_T,n)\geq 1$ such that, for $\m$-a.e.~$x,y\in \M$ and $0<t\leq (4/(KN))\wedge (\diam\M/c_2)$:
    \[
    \frac{c_1^{-1}}{V(x,\sqrt{t})}\e^{-c_1\frac{d(x,y)^2}{t}
    }
    \leq p_t(x,y)
    \leq 
    \frac{c_1}{V(x,\sqrt{t})}\e^{-\frac{d(x,y)^2}{c_1t}
    }.
    \]
    \end{enumerate}
\end{theorem}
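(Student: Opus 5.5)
The plan is to transfer the estimates of the preceding lemma from the time-changed space $(\M,\dist_w,\m_w)$ back to $(\M,\dist,\m)$. The key facts are that $0\le w\le 4\k_T$ $\m$-a.e.\ (Theorem~\ref{thm:BE}), so $\dist\le\dist_w\le e^{4\k_T}\dist$, and $\m\le\m_w\le e^{8\k_T}\m$. Both distortion constants depend only on $\k_T$, hence only on $(\k_T,n)$.

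\textbf{Comparisons.} I first record the inclusions
\[
B(x,e^{-4\k_T}r)\subset B_w(x,r)\subset B(x,r).
\]
It follows that
\[
V(x,e^{-4\k_T}r)\le V_w(x,r)\le e^{8\k_T}V(x,r).
\]
On the Sobolev side, $\Ch=\Ch^w$, the spaces $W^{1,2}$ coincide, and $\Gamma^w(\varphi)\,\m_w=\Gamma(\varphi)\,\m$ as measures. Consequently, energy integrals over a set are the same in both structures.

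\textbf{Step 1: doubling.} Iterating $V_w(x,2r)\le c_n^NV_w(x,r)$ for $r\le 1/(KN)$ gives
\[
V_w(x,R)\le c_n^N(2R/r)^{N\log_2 c_n}V_w(x,r).
\]
This is the polynomial form, with dimension $N'=N\log_2 c_n$ (renamed $N$), and it holds first for $\m_w$-a.e.\ $x$. It extends to every $x$ by approximating $x$ with good centers $x_k\to x$ and using nested balls with slightly perturbed radii. Combining this with the ball comparisons yields
\[
\frac{V(x,R)}{V(x,r)}\le e^{8\k_T}\,\frac{V_w(x,e^{4\k_T}R)}{V_w(x,r)}\le c_1\,(R/r)^{N}.
\]
The cap $\diam\M/c_2$ in the range of radii ensures that the enlarged radius $e^{4\k_T}R$ stays in the admissible range and that balls do not saturate.

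\textbf{Step 2: Poincaré.} Since the mean minimizes $a\mapsto\int|\varphi-a|^2$, I can write
\[
\int_{B(x,r)}|\varphi-\bar\varphi|^2\d\m\le\int_{B_w(x,e^{4\k_T}r)}|\varphi-\bar\varphi^w|^2\d\m_w .
\]
The weighted Poincaré inequality of the preceding lemma, in its scaled form with the factor $r^2$, bounds the right-hand side by
\[
c\,r^2\int_{B_w(x,2e^{4\k_T}r)}\Gamma^w(\varphi)\d\m_w\le c\,r^2\int_{B(x,2e^{4\k_T}r)}\Gamma(\varphi)\d\m .
\]
This gives the claim with $c_2=2e^{4\k_T}$, for $\varphi\in\Test$ and then for all $\varphi\in W^{1,2}$ by density.

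\textbf{Step 3: Harnack and heat kernel bounds.} With local doubling and local $L^2$-Poincaré now available on $(\M,\dist,\m)$, and $\Ch$ a strongly local regular Dirichlet form with intrinsic metric $\dist$, I invoke Sturm's localized theory \cite{Sturm95_II,Sturm-96}. It yields the scale-invariant parabolic Harnack inequality up to the given scale, and from it the two-sided Gaussian bounds for $p_t$. The constants depend only on the doubling and Poincaré constants and on the scale.

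\textbf{Main obstacle.} I expect Step 2 to be the hardest, and specifically the bookkeeping of the scaling. The preceding lemma's Poincaré inequality is stated without the factor $r^2$, so I must recover it. My first attempt is to redo the standard Kusuoka--Stroock/Jerison argument from the heat kernel lower bound: use
\[
\P_t\varphi^2-(\P_t\varphi)^2\le 2t\,\P_t\Gamma(\varphi),
\]
which follows from $\BE(-K,N)$ with $t\lesssim 1/K$, on $(\M,\dist_w,\m_w)$. Applying it at $t=r^2$ and bounding the heat kernel from below by $c_n^{-N}/V_w$ then produces the correct factor $r^2$. A second point needing care is the passage from $\m_w$-a.e.\ centers to all centers, which is handled by the continuity-in-$x$ approximation above.
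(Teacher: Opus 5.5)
Your strategy matches the paper's. You transfer the estimates of the preceding lemma through $\dist\le\dist_w\le \e^{4\k_T}\dist$, $\m\le\m_w\le \e^{8\k_T}\m$ and $\Gamma^w(\varphi)\,\m_w=\Gamma(\varphi)\,\m$, then invoke Sturm's theory. In places you are more careful than the paper. You treat centers outside the $\m_w$-null set. And if the lemma's Poincar\'e inequality is taken with the factor $r^2$, your Step 2 keeps the energy on $B(x,2\e^{4\k_T}r)$, whereas the paper's own chain ends with the global quantity $c_n\e^{4\|w\|_\infty}r^2\Ch(\varphi)$. That global form is not what Sturm's theory requires. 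There are nevertheless two genuine problems.

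\textbf{Recovering the factor $r^2$.} The Kusuoka--Stroock argument you sketch uses the global semigroup, and it cannot produce a ball-localized right-hand side. You combine $\P^w_t\varphi^2-(\P^w_t\varphi)^2\le C t\,\P^w_t\Gamma^w(\varphi)$ with $p^w_t(y,z)\ge c_n^{-N}/V_w(x,\sqrt t)$ and average over $y\in B'=B_w(x,\sqrt t/2)$. What you actually obtain is
\[
\int_{B'}|\varphi-\bar\varphi|^2\d\m_w\le C\,t\int_\M\Gamma^w(\varphi)\,\P^w_t\One_{B'}\d\m_w\le C\,t\,\Ch(\varphi).
\]
The reason is that $\P^w_t\Gamma^w(\varphi)(y)$ integrates $\Gamma^w(\varphi)$ against $p^w_t(y,\cdot)$ over all of $\M$, while the lemma controls $p^w_t$ only near the diagonal. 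Even a Gaussian upper bound would leave a Gaussian-weighted global energy. You cannot discard the energy outside $B_w(x,2r)$, because no extension operator is available. So you land exactly on the paper's global inequality, not on $c\,r^2\int_{B_w(x,2\e^{4\k_T}r)}\Gamma^w(\varphi)\d\m_w$.

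This matters for Step 3. Sturm's local theory applies the Poincar\'e inequality to functions such as $\log u$ for solutions $u$ defined only on $B(x,2r)$, so it genuinely needs the localized form. A standard remedy runs as follows.
\begin{itemize}
\item Upgrade the near-diagonal bounds to two-sided Gaussian bounds for $p^w$. Use Davies/Grigor'yan for the upper bound, since $\dist_w$ is intrinsic for $\Ch^w$ by upper regularity. Use chaining in the length space $(\M,\dist_w)$ for the lower bound.
\item Deduce a near-diagonal lower bound for the Dirichlet heat kernel of $B=B_w(x,2r)$ via the Dynkin--Hunt formula.
\item Run Kusuoka--Stroock with the Neumann semigroup of $B$. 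It is conservative, so the reverse inequality with energy $\int_B\Gamma^w(\varphi)\d\m_w$ is elementary, and its kernel dominates the Dirichlet kernel.
\end{itemize}
Alternatively, quote the equivalence, for strongly local regular Dirichlet forms, between two-sided Gaussian bounds and doubling plus local Poincar\'e.

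\textbf{The exponent and the scale.} Renaming $N\log_2 c_n$ as $N$ is harmless for doubling as such. It does, however, destroy the clause ``$N\to n$ as $\k_T\to0$'' in item 1; the paper silently uses exponent $N$ at this point. Do not iterate. Use instead the explicit time dependence in Lemma~\ref{lem:LYtimechange}. One has
\[
\frac{\e^{2KR^2/3}-1}{\e^{2Kr^2/3}-1}\le \frac{R^2}{r^2}\,\e^{2KR^2/3},
\]
and $R^2\le 4/(KN)$. Compare $p^w_{R^2}$ with $p^w_{r^2}$ at points of $B_w(x,r/2)$, with $R\ge 2r$ so that the Gaussian factor is bounded below. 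Then insert the two-sided near-diagonal bounds. This gives $V_w(x,R)\le C\,c_n^{2N}(R/r)^N V_w(x,r)$ with $C$ absolute, and the exponent stays $N$.

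Finally, the cap $\diam\M/c_2$ does not keep the enlarged radius $\e^{4\k_T}R$ inside the range where the estimates for $V_w$ hold. You must shrink the admissible scale by a $\k_T$-dependent factor, or extend local doubling to bounded multiples of the scale using the length structure. The paper's proof has the same mismatch.
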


\begin{proof}
 1. The previous lemmma yields for all $x\in\M$ and $0<r\leq R<(4/(KN))\wedge (\diam\M/\e^{2\|w\|_\infty})$
    \[
            \frac{V(x,R)}{V(x,r)}
           \leq \e^{2\|w\|_\infty}\frac{V_w(x,\e^{2\|w\|_\infty}R)}{V_w(x,r)}
           \leq \e^{2\|w\|_\infty}\e^{2\|w\|_\infty N}\left(\frac{R}{r}\right)^{N},
        \]
        where we used $w\geq 0$ a.e.~in the first inequality, and the quasi isometry properties of $\dist$ and $\dist_w$ in the second inequality. Thus, $(\M,\dist,\m)$ has the local doubling property. 
        \\
        Next, we aim for the weak Poincar\'e inequality. Let $r<(4/(KN))\wedge (\diam\M/\e^{2\|w\|_\infty})$
        and denote $r_w:=\e^{2\|w\|_\infty}r$. By the definition of $\m_w=\e^{2w}\m$ and the quasi isometry properties of $\dist$ and $\dist_w$, for all $x\in\M$ and $\phi\in\Test(\M,\dist,\m)$, we have
        \begin{align*}
            \int_{B(x,r)}|\varphi -\bar\varphi|^2\d\m
            & =
            \inf_{a\in\RR}\int_{B(x,r)}|\varphi -a|^2\d\m
           \\
           & \leq \inf_{a\in\RR}\int_{B_w(x,r_w)}|\varphi -a|^2\d\m_w\\
           & 
=\int_{B_w(x,r_w)}|\varphi -\bar\varphi|^2\d\m_w
\\
& \leq c_n r_w^2\Ch^w(\varphi)\\
& =c_n\e^{4\|w\|_\infty}r^2\Ch(\varphi).
        \end{align*}
    
2.  This is a direct consequence of 1.~and \cite[Theorem~3.5]{Sturm-96}.

3. The heat kernel lower bound follows from \cite[Theorem~4.8]{Sturm-96}. Since $\dist$ is intrinsic and metric balls are precompact, volume doubling and weak Poincar\'e imply the scale-invariant Sobolev inequalities. Hence, the heat kernel upper bound follows from \cite[Theorem~2.4]{Sturm95_II}.
\end{proof}

\begin{remark} We do not know whether the Li-Yau inequality on $(\M,\dist,\m)$ can be obtained without constructing a time-changed space. In other contexts, this inequality was obtained directly, see e.g.~the works \cite{BaudoinGarofalo,GarofaloMondino-13, BBG17,Rose19,Carron-19}.
\end{remark}

\subsection*{AI disclosure}

No AI was used during the preparation of this article.

\bibliographystyle{alpha}
\bibliography{biblio}

\end{document}